\newcommand{\N}{ \mathbb{N}}
\newcommand{\Z}{ \mathbb{Z}}
\newcommand{\R}{ \mathbb{R}}
\newcommand{\Top}{\mathrm{Top}}
\theoremstyle{plain}
    \newtheorem{teo}{Theorem}[section]
    \newtheorem{lem}[teo]{Lemma}
	\newtheorem{prop}[teo]{Proposition}
    \newtheorem{de}[teo]{Definition}
     \newtheorem{cor}[teo]{Corollary}
    \newtheorem{rem}[teo]{Remark} 
\def\ogg~{{\rm \og}}   
\def\emptyset{\varnothing}
\def\BState{\State\hskip-\ALG@thistlm}
\title{A Grothendieck ring of finite characteristic}
\subjclass[2010]{Primary 03C07, Secondary 03C10, 03C98, 08C10}
\address{Department of mathematics, Ben Gurion University of the Negev}
\email{Esther.Elbaz.Saban@normalesup.org}
\author{Esther Elbaz}
\date{\today }
\begin{document}

\maketitle

\begin{abstract}
We construct, for every integer $N\in \N^*$, a structure whose Grothendieck ring is isomorphic to $(\Z/N\Z)[X]$, thus proving the existence of structures with a non-zero Grothendieck ring with non-zero characteristic. Namely, this structure consists in the bijection without cycles between a set and a complement of $N$ points in this set.
\end{abstract}

\section{Introduction}\label{s.introduction}

Originating from algebraic geometry, and inspired by the work of Schanuel \cite{Sch}, Grothendieck rings were introduced in the larger framework of model theory in 2000 by T. Scanlon and J. Krajicek \cite{SK} on the one hand and F. Loeser an J. Denef \cite{LD} on the other hand. Roughly speaking, to each structure, $M$, one can associate its Grothendieck ring by identifying its definable subsets of a Cartesian power of $M$, that are in definable bijection and then endowing the quotient set with an additive law that corresponds on the level of
definable subsets to the disjoint union, and a multiplicative law that corresponds on the
level of definable subsets to the product.


To give the precise definition, let us start by defining the Grothendieck semi-ring. 
We recall that a semi-ring is an algebraic structure similar to a ring with the difference that the additive law need not be cancellative. More precisely, $(R, +, \times)$ is a \textbf{semi-ring} iff 
\begin{itemize}
\item $(R, +)$ is a commmutative monoid with neutral element $0$,
\item $(R, \times)$ is a monoid,
\item $0$ annihilates $R$
\item the law $\times$ is distributive (on the right and on the left) over the law $+$.
\end{itemize}

\begin{de}\label{GR}
Let $M$ be a non-empty structure and let $\mathrm{Def}(M)$ be the set of definable (with parameters) subsets of any finite Cartesian power of $M$.
We define an equivalence relation on $\mathrm{Def}(M)$ by: $A, B\in \mathrm{Def}(M)$ are equivalent if and only if, there exists a definable bijection between them.\\
We consider the quotient set, denoted $ \widetilde{\operatorname {Def}}(M) $, and denote by $[A]$ the class of $A\in \mathrm{Def}(M)$. We define two laws on $ \widetilde{\operatorname {Def}}(M) $:
\begin{itemize}
\item An additive law that corresponds to the disjoint union: $[A]+[B]=[A\cap B]+[A\cup B]$.
\item A multiplicative law that corresponds to the Cartesian product: $[A]\times[B]=[A\times B]$.
\end{itemize}
These two laws are compatible with the equivalence relation and the set $ \widetilde{\operatorname {Def}}(M) $ endowed with these two laws is the Grothendieck semi-ring of $M$.
\end{de}

Beware that the additive law is not cancelative: $a+b=a+c$ doesn't imply $b=c$. To remedy this, let us consider the equivalence relation defined on $ \widetilde {\operatorname {Def}}(M) $ by: $a, b\in \widetilde {\operatorname {Def}}(M) $ are equivalent if and only if there exists $c\in \widetilde {\operatorname {Def}}(M) $ such that $a+c=b+c$. The quotient set, denoted $\widetilde {\operatorname {Def}}'(M)$, is a cancelative monoid for the additive law. Hence, there exists a unique ring (up to isomorphism over $(\widetilde{\operatorname{Def}}'(M), +, \times) $) that embeds this last quotient and that is minimal for this property.

\begin{de}
The minimal ring that embeds $(\widetilde{{\operatorname {Def}}}'(M), +, \times) $ is called the \textbf{Grothendieck ring of} $M$ and is denoted $K_0(M)$.
\end{de}

Two quotients naturally occur in this definition. The first one identifies subsets in definable bijection, the second makes the additive law cancelative. This second quotient is at the origin of collapse phenomena. Let us take the example of the real field: $ \R $ is equal to the disjoint union $ \R^{<0} \sqcup \lbrace 0 \rbrace \sqcup \R^{> 0} $ and, in the language of fields, both $ \R^{> 0}$ and $\R^{<0} $ are in definable bijection with $ \R $. Denoting $ [\R] $ the class of $ \R $ in its Grothendieck ring, we obtain that $ 2 [\R] + 1 = [\R] $ which implies that $ [\R] = - 1 $: the class of $ \R $ is equal to the opposite of the class of a singleton. The Grothendieck ring of the real closed field $ \R $ thus does not account for the dimension of definable subsets. It is actually isomorphic to $\Z$ \cite{SK}.

The onto-pigeonhole principle, abbreviated onto-PHP, states that the Grothendieck ring of a structure is non-zero if and only if none of its
definable subsets is in definable bijection with itself deprived of a point. 
This equivalence has been demonstrated by Krajicek and Scanlon \cite{SK} and was used to show the vanishing of the 
Grothendieck ring of several rings and fields, such as for example, those 
of Laurent series fields \cite{Cl}, or more generally of certain discretely valued fields \cite{Cl, HC}. 
Beyond the case of fields which is by far the most studied, there are few structures whose Grothendieck ring is known. Schanuel considers a suitable reduct of the field $\R$ for which definable subsets are polyhedra, and obtains the Grothendieck ring $\Z$. For a suitable variant, Ma\v r\'ikov\'a \cite{Marikova} obtains the ring $\Z[X]/(X+X^2)$.
 A scheme of computation for an arbitrary module over a unital ring was obtained by Kuber \cite{Kuber}; in particular for the $p$-adic ring $\Z_p$, viewed as module over itself, or as plain abelian group, it is shown \cite[Cor.\ 6.2.3]{Kuber} to be isomorphic to $\Z[X]/((p-1)X)$; these are (for odd prime $p$) the first examples for which the underlying abelian group is not torsion-free.
Nevertheless, it was unknown whether there exists a structure with a non-zero Grothendieck ring of finite characteristic (i.e., whose additive group is nontrivial and torsion). In this paper, we answer this question positively by constructing, for any integer $N\in \N_{>0}$, a structure whose Grothendieck ring is $(\Z/N\Z)[X]$.

\begin{teo}\label{but}
For any positive integer $N\in \N$, there exists a structure whose Grothendieck ring is isomorphic to $(\Z/N\Z)[X]$.
\end{teo}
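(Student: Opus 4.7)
Following the hint in the abstract, I work with the structure $\cM$ consisting of $N$ countably infinite orbits together with a single unary function $f:\cM\to\cM$ that is injective, whose image complement $S:=\cM\setminus f(\cM)$ has exactly $N$ elements $s_1,\ldots,s_N$, and which has no periodic point. Up to isomorphism $\cM=\bigsqcup_{i=1}^{N}\{f^n(s_i):n\in\N\}$, and each $s_i$ is $\emptyset$-definable as an element of $\cM\setminus f(\cM)$, so I may work in the expanded language $\{f,s_1,\ldots,s_N\}$ without changing the notion of definability.

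I would first prove quantifier elimination for $(\cM,f,s_1,\ldots,s_N)$ by a standard back-and-forth between finite partial isomorphisms. This reduces every definable $A\subseteq\cM^k$ to a Boolean combination of atomic formulas $f^a(x_i)=f^b(x_j)$ and $f^a(x_i)=f^b(s_l)$, which simplify to either a pinning $x_i=f^c(s_l)$ or a same-orbit shift $x_i=f^c(x_j)$. From this I would extract a cellular normal form: every definable $A\subseteq\cM^k$ is a finite disjoint union of cells, each specified by a partition of $\{1,\ldots,k\}$ into orbit-classes, a shift vector inside each class, and, for each class, either a pinning to a specific element of $\cM$ or the declaration that one representative coordinate ranges freely over $\cM$ with only finitely many explicitly forbidden values. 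A cell with $d$ free classes is then in definable bijection with $\cM^d$ minus a finite set.

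The bijection $f:\cM\to\cM\setminus S$ forces $[\cM]=[\cM]-N$ in $K_0(\cM)$, hence $N\cdot 1=0$; combined with the cellular normal form this exhibits $K_0(\cM)$ as generated over $\Z/N\Z$ by $X:=[\cM]$ and produces a surjective ring homomorphism $\Psi:(\Z/N\Z)[X]\twoheadrightarrow K_0(\cM)$ sending $X$ to $[\cM]$.

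What remains, and what I expect to be by far the hardest step, is to construct an invariant $\chi:\mathrm{Def}(\cM)\to(\Z/N\Z)[X]$ that is additive on disjoint unions, multiplicative on Cartesian products, and constant on definable-bijection classes, so that $\chi$ descends to the inverse of $\Psi$. The natural candidate sends a cell of free dimension $d$ with $r$ finitely removed points to $X^d$ corrected by an appropriate lower-order polynomial in $X$ with coefficients in $\Z/N\Z$, and extends additively over a cellular decomposition; independence of the chosen decomposition is a combinatorial check by inclusion-exclusion. The delicate point is invariance under an arbitrary definable bijection $g:A\to B$: using QE and the absence of $f$-cycles, one must show that $g$ preserves the free-dimension stratification and that every apparent collapse of a finite quantity descends from the canonical identification $\cM\leftrightarrow\cM\setminus S$, which absorbs exactly $N$ points at a time. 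Equivalently, one must rule out any definable bijection between a set and itself minus $m$ points for $1\le m<N$ --- content strictly beyond the onto-PHP --- and I expect this will require a coordinate-wise analysis of definable maps between cells together with an induction on the arity $k$.
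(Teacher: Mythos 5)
Your first half (quantifier elimination, reduction of definable sets to a cellular/simple-set normal form, the relation $N\cdot 1=0$ coming from $f$, and the resulting surjection $(\Z/N\Z)[X]\twoheadrightarrow K_0(\cM)$) matches the paper's route through Sections 3--5. But the second half is a statement of intent rather than a proof, and it is precisely the part that carries the weight of the theorem. You correctly identify that one must show the candidate invariant $\chi$ is constant on definable-bijection classes, and that this amounts to showing every ``collapse'' of points by a definable injection happens in multiples of $N$; you then write that you \emph{expect} this to follow from a coordinate-wise analysis. That expectation is the theorem. Nothing in your proposal supplies the mechanism, so as written the injectivity of $\Psi$ is unproven and the Grothendieck ring could a priori be a proper quotient of $(\Z/N\Z)[X]$.

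The paper's resolution of this gap occupies its Sections 6--8 and rests on a concrete structural fact you would need to isolate and prove: by quantifier elimination, every definable injection decomposes into finitely many \emph{normal} pieces, i.e.\ pieces whose graphs are conjunctions of equations $f^{k}(y_i)=x_{\sigma(i)}$; a normal injection extends uniquely from $B\smallsetminus C$ to the whole simple set $B$; and the image of a simple set $B$ under a normal injection is another simple set $B'$ of the same dimension minus fibers $B'\cap(M^{i-1}\times F_i\times M^{m-i})$ where each finite set $F_i$ has cardinality a multiple of $N$ --- because the only source of missing points is that $f^{k}(M)=M\smallsetminus F$ with $|F|=kN$. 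One then shows (the paper's Lemma \ref{remplace}) that removing such fibers does not change the class modulo $N$, and runs an induction on dimension comparing the ``associated polynomials'' of basic sets in definable bijection. Your proposed invariant $\chi$ would be well-defined exactly by this analysis, so the two strategies are morally the same; but without the normal-function machinery (or an equivalent), your construction of $\chi$ cannot be verified, and in particular your claim that there is no definable bijection $A\to A\smallsetminus\{m\text{ points}\}$ for $1\le m<N$ remains unsupported.
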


Let $M$ be a candidate structure and let us denote $X$ the class of $M$ in the Grothendieck ring. 
Because we want the Grothendieck ring to be of characteristic $N$, it is natural to assume $M$ to be endowed with a bijection, $f$, with itself deprived of $N$ points. Indeed, such a bijection implies that, in $K_0(M)$, we have the equality $X-N=X$ and hence that $N=0$. The characteristic of the ring $K_0(M)$ is thus a divisor of $N$.

To conclude that the Grothendieck ring is precisely $(\Z/N\Z)[X]$, we will rely on a crucial property: the existence of a class of definable sets, called "simple sets" satisfying the following properties:
\begin{itemize}
\item (P1) any definable subset can
be built as a Boolean combination of simple sets;
\item (P2) they are in definable bijection with a Cartesian power of $M$;
\item (P3) injections whose graph is defined by a conjunction of equalities between terms, transform a simple set into another simple set.
\end{itemize}
We will show that to obtain these properties it is sufficient to require the bijection $f$ to be without cycles.
Properties P1 and P2 then imply that $K_0(M)$ is a quotient of $\Z[X]$. We already know, because of the existence of $f$, that $K_0(M)$ is of characteristic $N$ so we can obtain that $K_0(M)$ is a quotient of $(\Z/N\Z)[X]$. 
To conclude, we have to verify there are no other independent relations in $K_0(M)$.
The property P3 and the properties of simple sets that can be seen as irreducible closed sets of the noetherian topology they generate, are the final ingredient of the proof.

It should be noted that in the general case two elementarily equivalent structures do not necessarily have isomorphic Grothendieck rings. However, Perera showed it holds in the case of modules over a commutative ring~\cite{Ps}.

Here we prove that all the model of the bijection without cycles with a complement of $N$ points have the same Grothendieck ring, isomorphic to $(\Z/N\Z)[X]$. In particular, Theorem \ref{but} results from the following theorem:

\begin{teo}\label{but2}
Let $N\in \N_{>0}$, let $L_N:=\lbrace f\rbrace$ be a language consisting of a unary function symbol and let $\mathcal{T}_N$ be the theory
stating that:
\begin{itemize}
\item the function $f$ is a bijection with coimage a finite set of cardinality $N$;
\item for every $n\in \N_{>0},$ $f^n(x)\neq x$ where $f^n$ is the $n$-th iterate of $f$.
\end{itemize}
Then any model of $ \mathcal{T}_N$, $M$, admits a Grothendieck ring isomorphic to $(\Z/N\Z)[X]$. Furthermore, the class of $M$ in $K_0(M)$ is $X$.
\end{teo}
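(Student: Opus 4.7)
The plan is to follow the roadmap sketched in the introduction, establishing properties P1--P3 for an explicit class of ``simple sets'' in each $M^k$ and then deducing the ring identification. To begin, I would analyse the structure of a model $M \models \mathcal{T}_N$: since $f$ is a bijection from $M$ onto $M \setminus C$ with $|C| = N$ and $f$ has no finite cycles, $M$ decomposes as the disjoint union of the $N$ forward $f$-orbits based at the points $c_1, \ldots, c_N$ of $C$ (each isomorphic to $\N$) together with an arbitrary family of bi-infinite $\Z$-orbits. Working with the $c_i$ and whatever additional elements are needed as parameters, I would prove quantifier elimination for $\mathcal{T}_N$: each atomic formula $f^n(x) = f^m(y)$ has at most one solution in $x$ for fixed $y$, and the predicate ``$y$ has an $n$-th preimage'' is already definable as $\bigwedge_{i,\,j < n} y \neq f^j(c_i)$. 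Every definable subset of $M^k$ then reduces to a Boolean combination of atomic formulas of the form $f^n(x_i) = f^m(x_j)$ and $f^n(x_i) = f^m(c_\ell)$.

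Next, I would define a simple set $S \subseteq M^k$ as the solution set of a system of equalities $x_i = f^{n_i}(y_{\sigma(i)})$, where $y_1, \ldots, y_j$ are auxiliary free variables ranging over $M$ (subject to genericity conditions ensuring their $f$-orbits are pairwise distinct and, where required, distinct from the named $\N$-orbits) and where each $x_i$ is either such a term in some $y_\ell$ or is fixed to a parameter. By construction the projection $(x_i) \mapsto (y_\ell)$ exhibits $S$ in definable bijection with $M^j$ (giving P2), and P3 follows from composition of such systems of equalities. For P1, I would put an arbitrary quantifier-free defining formula into disjunctive normal form: each conjunction of positive literals cuts out a simple set, and the negative literals correspond to Boolean subtraction of lower-dimensional simple sets.

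Properties P1 and P2 jointly yield a surjection $\Z[X] \twoheadrightarrow K_0(M)$ sending $X$ to $[M]$, and the definable bijection $f : M \to M \setminus C$ forces $X = X - N$, so this factors through $(\Z/N\Z)[X] \twoheadrightarrow K_0(M)$. The main obstacle is the injectivity of this last surjection. I would construct an inverse ring homomorphism $\chi : K_0(M) \to (\Z/N\Z)[X]$ by viewing the simple sets in $M^k$ as the irreducible closed sets of a Noetherian topology they generate: each definable $A$ then admits a canonical decomposition as a finite disjoint union of generic parts of simple sets of respective dimensions $j_i$, and one sets $\chi(A) := \sum_i X^{j_i} \pmod{N}$. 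The difficulty is to verify that $\chi$ is invariant under \emph{arbitrary} definable bijection, not just under the equality-graph injections of P3; I expect this to be handled by reducing a general definable bijection, via P1 and the Noetherian structure, to a finite patchwork of equality-graph pieces on which P3 applies directly, while the relation $X = X - N$ ensures that dimension counts need only be coherent modulo $N$. This reduction, together with the coherence of the mod-$N$ counting along such patchworks, is where the real subtlety of the argument lies.
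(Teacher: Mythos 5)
Your setup (orbit analysis, quantifier elimination, simple sets defined by systems of equalities $x_i=f^{n}(x_j)$ and $x_i=c$, properties P1--P3, and the resulting surjection $(\Z/N\Z)[X]\twoheadrightarrow K_0(M)$) matches the paper's Sections 2--5 and is sound. The gap is in the injectivity argument, which is the actual content of the theorem. First, a concrete error: your map $\chi(A):=\sum_i X^{j_i}$, recording only the dimensions of the pieces of a canonical decomposition $A=\bigsqcup_i\bigl(B_i\smallsetminus\bigcup_j B_{i,j}\bigr)$, is not additive. Take $A=M\smallsetminus\{pt\}$: it is a single generic piece of dimension $1$, so your formula gives $\chi(A)=X$, yet $\chi(M)=X$ and $\chi(\{pt\})=1$, contradicting $\chi(M)=\chi(A)+\chi(\{pt\})$. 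What you need is the full inclusion--exclusion polynomial, i.e.\ precisely the image of $[A]$ under the surjection --- at which point showing $\chi$ is well defined on $K_0(M)$ \emph{is} the injectivity statement, and your construction is circular unless the invariance is proved directly.

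Second, and more importantly, you have correctly located the subtlety ("coherence of the mod-$N$ counting along patchworks") but offered no mechanism for it. The paper's resolution occupies its Sections 5--7 and rests on a quantitative fact you never isolate: if $g$ is a normal injection (graph given by equalities $f^{k_i}(y_i)=x_{\sigma(i)}$) defined on a simple set $B$, its image is not a simple set but a simple set $B'$ minus slices $B'\cap(M^{i-1}\times F_i\times M^{m-i})$ where each finite set $F_i$ has cardinality a \emph{multiple of} $N$ (since $f^k(M)=M\smallsetminus F$ with $|F|=kN$), and moreover the intersections of these slices with any simple subsets of $B'$ again have cardinalities that are multiples of $N$ coordinatewise. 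This is what guarantees that the discrepancy between the two sides of a patchwork is a polynomial with all coefficients divisible by $N$, hence zero in $(\Z/N\Z)[X]$. One then needs standard but nontrivial bookkeeping: reduce to comparing two "basic sets" (disjoint unions of $F_k\times M^k\times S_k$), extract the top-dimensional pieces of a decomposition adapted to the bijection to get equality of leading coefficients, replace the lower-dimensional remainders by basic sets using inclusion--exclusion, absorb the $F$-slice defects into multiples of $N$, and induct on dimension. Without this, your argument establishes only that $K_0(M)$ is a quotient of $(\Z/N\Z)[X]$, not that it equals it.
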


\begin{cor}
Let $N\in \N_{>0}$ and $ \mathcal{T}_N$ be the theory defined in Theorem \ref{but2} and $M$ a model of $ \mathcal{T}_N$.
Then, for every $k\in \N$, there exists a definable injection from $M$ into $M$ minus $k$ points if and only if $N$ divides $k$. In particular (for $N\ge 2$), we have a failure of the "Cantor-Bernstein property": for any point $x\in M$ there exist definable injections 
$M\rightleftarrows M\smallsetminus \{x\}$ but no definable bijection between these two sets.\qed
\end{cor}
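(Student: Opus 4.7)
The plan is to derive the corollary directly from Theorem \ref{but2}, reading off the equivalence from the identity $[M]=X$ in $K_0(M)\cong(\Z/N\Z)[X]$. I will interpret the main equivalence as concerning definable \emph{bijections} $M\to M\setminus A$ with $|A|=k$ (equivalently, definable injections $M\to M$ whose image complement has cardinality exactly $k$); this is the only reading compatible with the Cantor--Bernstein clause that follows, which explicitly asserts that injections both ways do exist for $k=1$.

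For $(\Leftarrow)$ I iterate $f$: when $k=mN$, the $m$-th iterate $f^m\colon M\to M$ is a definable bijection onto $M$ minus the $mN$ distinct points $\{f^j(a_i):0\le j<m,\ 1\le i\le N\}$, where $a_1,\dots,a_N$ denotes the coimage of $f$, and distinctness follows from the no-cycles axiom. For $(\Rightarrow)$, given a definable bijection $\phi\colon M\to M\setminus A$ with $|A|=k$, the definition of the Grothendieck ring gives $[\phi(M)]=[M]=X$, and the partition $M=\phi(M)\sqcup A$ yields $X=X+k$ in $(\Z/N\Z)[X]$, forcing $N\mid k$.

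For the Cantor--Bernstein failure, the inclusion $M\setminus\{x\}\hookrightarrow M$ is definable from the parameter $x$, and the specialization $k=1$ of the preceding argument forbids a definable bijection $M\to M\setminus\{x\}$ as soon as $N\ge 2$. The only nontrivial step, which I expect to be the main obstacle, is producing a definable injection $\phi\colon M\hookrightarrow M\setminus\{x\}$ for an arbitrary $x\in M$. If $x=a_i$ for some $i$, then $f$ itself lands in $M\setminus\{a_1,\dots,a_N\}\subseteq M\setminus\{x\}$. Otherwise $x$ has a unique $f$-preimage $y:=f^{-1}(x)$, definable from $x$, and I splice $f$ by setting $\phi(y):=a_1$ and $\phi(z):=f(z)$ for $z\neq y$; the image is $M\setminus\{a_2,\dots,a_N,x\}\subseteq M\setminus\{x\}$, and injectivity uses $a_1\notin f(M)$. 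I note that a naive translation along the full $f$-orbit of $x$ would fail definability, since membership in that orbit requires unbounded iteration and is not first-order.
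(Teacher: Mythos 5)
Your proof is correct and is essentially the argument the paper treats as immediate from Theorem \ref{but2} (the corollary is stated without proof): the forward direction reads $N\mid k$ off the relation $[M]=[M]+k$ in $K_0(M)\cong(\Z/N\Z)[X]$, the reverse direction iterates $f$, and the Cantor--Bernstein injection is obtained by splicing $f$ at $f^{-1}(x)$. Your reading of ``injection from $M$ into $M$ minus $k$ points'' as a bijection onto the complement of exactly $k$ points is indeed the only one consistent with the Cantor--Bernstein clause; the sole (harmless) imprecision is that distinctness of the $mN$ omitted points $f^j(a_i)$ follows from injectivity of $f$ together with the coimage points lying outside $f(M)$, not from the no-cycles axiom.
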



\begin{rem}
As a corollary, we also obtain, that the "dimension semi-ring" (\cite{Sch}, see Definition \ref{de_dimsemi}) of any model of $\mathcal{T}_N$ is isomorphic to $\{-\infty\}\cup\N$, with addition being max and multiplication being usual multiplication (Corollary \ref{cor_dimsem}).
\end{rem}

Some of the ideas we use here to prove Theorem \ref{but} ---~ like the construction of a theory whose models are the freest possible structures admitting the prechosen ring as their Grothendieck ring, or like the possibility of defining a class of "simple sets" with properties (P1), (P2), (P3)~--- will be used in a forthcoming work to construct, given any quotient $R$ of $\Z[X]$, a structure admitting $R$ as its Grothendieck ring.

In this paper, after defining the theory, we compute the Grothendieck ring of any of its model by the general method explained above.

\tableofcontents

\section{The theory}
We fix an integer $N\in \N_{>0}$.
The purpose of the rest of this paper is to construct a structure with Grothendieck ring isomorphic to $\Z[X]/(N)$.
Sticking to the general ideas explained above, we place ourselves in a theory stating the existence of a bijection between $M$ and itself deprived of $N$ points. 
In order to be able to define the simple sets mentioned in the previous section, we require furthermore that $f$ has no cycles i. e. for any integer $n\in\N^*$, the function $f^n$ has no fixed points. 

More precisely, let $L$ be the language $\lbrace f, c_1,\ldots, c_N\rbrace$ where $f$ is a unary function, and for every $1\leq i\leq N$, $c_i$ is a symbol of constant.
Let $\mathcal{T}_N$ be the first-order theory stating that:
\begin{itemize}
\item for every $i, j\in \lbrace 1,\ldots N\rbrace$, $i\neq j$ implies $c_i\neq c_j$;
\item $f$ is a bijection from $M$ onto $M\smallsetminus \lbrace c_1,\ldots, c_N\rbrace$;
\item for every integer $n\in \N_{>0}$, $\forall x\in M$, $f^{n}(x)\neq x$ where $f^n$ is the $n$-th iterate of $f$.
\end{itemize}

It is trivial to see that this theory is consistent. For instance, $\N$ endowed with the function $f:x\mapsto x+N$ and $c_1=0, \ldots, c_N=N-1$ is a model of $\mathcal{T}_N$.

For the sequel of this paper, we use the convention that $f^0$ is the identity function.

\section{Quantifier elimination}

\begin{prop}\label{prop_qe}
The theory $\mathcal{T}_N$ admits quantifier elimination.
\end{prop}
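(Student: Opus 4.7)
The plan is to verify quantifier elimination via the standard embedding criterion: $\mathcal{T}_N$ eliminates quantifiers as soon as, for every pair of models $M, N \models \mathcal{T}_N$ with $N$ sufficiently saturated, every embedding $h : A \to N$ of a substructure $A \subseteq M$ extends to an embedding $M \to N$. By transfinite induction this reduces to the one-step extension: given $a \in M$, extend $h$ to the substructure of $M$ generated by $A \cup \{a\}$. I would first note that in the language $\{f, c_1, \ldots, c_N\}$ every substructure is closed under $f$ and contains every closed term $f^n(c_i)$; hence every substructure is a disjoint union of forward $f$-orbits, always including the $N$ rays based at $c_1, \ldots, c_N$. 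The axioms of $\mathcal{T}_N$ force every orbit in a model to be either one of those $N$ rays (order-isomorphic to $\N$) or a bi-infinite orbit (order-isomorphic to $\Z$).

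For the one-step extension (the case $a \in A$ being trivial) I would distinguish two cases. \emph{Case 1:} there is a minimal $k \geq 1$ with $f^k(a) \in A$. The extension is then forced: one must set $\phi(a)$ equal to the unique $k$-th preimage of $h(f^k(a))$ in $N$. Such a preimage exists because the condition $f^k(a) \in \mathrm{Image}(f^k)$ is equivalent to the quantifier-free conjunction $\bigwedge_{0 \leq i < k,\, 1 \leq j \leq N} f^k(a) \neq f^i(c_j)$, which is preserved by $h$. To check injectivity of the extension, assume for contradiction that $f^i(\phi(a)) = h(b)$ for some $b \in A$ and $0 \leq i < k$; applying $f^{k-i}$ and using $\phi(f^k(a)) = h(f^k(a))$ together with injectivity of $h$ forces $f^i(a) = b \in A$, contradicting minimality of $k$. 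The remaining atomic relations --- inequalities among the iterates $f^j(\phi(a))$ themselves --- are guaranteed by the no-cycles axioms.

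\emph{Case 2:} the forward orbit of $a$ never enters $A$. Since $A$ contains every $f^n(c_i)$, the element $a$ lies in a $\Z$-orbit of $M$ disjoint from $A$. I would seek $a' \in N$ realising the partial type saying $a' \neq f^n(c_i)$ for all $n \geq 0$ and $1 \leq i \leq N$ (so $a'$ also lies in a $\Z$-orbit) and $f^n(a') \neq h(b)$ for every $b \in A$ and $n \geq 0$ (so the new forward orbit avoids $h(A)$). Any finite fragment of this type is realised inside a fresh $\Z$-orbit, and since $\mathcal{T}_N$ is consistent with arbitrarily many $\Z$-orbits the type is consistent over $h(A)$, so saturation of $N$ furnishes the required $a'$. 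Setting $\phi(f^n(a)) := f^n(a')$ then yields the desired embedding. The main obstacle is precisely this second case: unlike Case 1 it is not forced by the algebra of $A$ and $h$, but depends on the model-theoretic richness of the target --- exactly the role of the saturation hypothesis on $N$.
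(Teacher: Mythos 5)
Your proof is correct and follows essentially the same route as the paper's: a one-step back-and-forth extension in which either the new element's forward orbit meets the given set (so its image is forced, with existence of the required $f^k$-preimage guaranteed by the quantifier-free conditions $f^k(a)\neq f^i(c_j)$) or it does not (so a fresh element is supplied by saturation). The only difference is cosmetic: you phrase the test via embeddings of substructures closed under $f$, while the paper works with finite tuples and reduces to bounded exponents by compactness.
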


\begin{proof}
We use the back-and-forth method.
Let $M$ and $M'$ be two saturated models of $T$.
Let $\mathbf{a}$ be an $l$-tuple of elements of $M$ and $\mathbf{a'}$ be an $l$-tuple of elements of $M'$ that has the same quantifier-free type as $\mathbf{a}$.
Let $b\in M$. Let us find $b'$ such that $\mathbf{a}^{\wedge}b$ and $\mathbf{a'}^{\wedge}b'$ have the same quantifier-free type.
If $b$ is in the orbit of $\lbrace c_1, \ldots, c_N\rbrace$, then it is sufficient to take $b'$ equal to the . In the following, we assume that $b\notin \lbrace c_1, \ldots, c_N\rbrace$.
The other quantifier-free formulas satisfied by $b$ over $\textbf{a}$ are of the form:
\begin{itemize}
\item $f^s(b)=f^t(a_i)$ 
\item $f^s(b)\neq f^t(a_i)$
\end{itemize}
where $1\leq i\leq l$ and $s, t\in \N$.
As $f$ has no cycles, there is no formula of the form $f^s(b)=f^t(b)$ with $s\neq t$.

Let $n\in \N$.
By compactness, it is enough to show that we can satisfy these formulas for $s, t$ bounded by $n$.

Since $f$ is a bijection, any formula of the form $f^s(x)=f^t(y)$ with $s, t\leq n$, is equivalent to a formula 
$f^k(x)=y$ or a formula $f^k(y)=x$ where $k\leq n$.
If for some $i, k$, $f^k(b)=a_i$ (respectively $f^k(a_i)=b$), then $a_i\notin \lbrace c_1, \ldots, c_N\rbrace$. Since $\mathbf{a}$ and $\mathbf{a'}$ have the same quantifier-free type, $a'_i\notin \lbrace c_1, \ldots, c_N\rbrace$. We take $b'$ to be the unique element $b'$ of $M'$ that satisfies $f^k(b')=a'_i$ (resp.\ $f^k(a'_i)=b'$): such an element exists and is unique since $f$ is a bijection from $M$ to $M\setminus \lbrace c_1, \ldots, c_N\rbrace$. 
If for every $1\leq i\leq l$, and every integer $k$, $f^k(b)\neq a_i$ and $f^k(a_i)\neq b$, then we take $b'$ to be any element of $M'\smallsetminus \lbrace f^j(a'_i)| -n\leq j\leq n, 1\leq i\leq l\rbrace$.
Any model of $\mathcal{T}_N$ being of course infinite, this is always possible.

By construction $\mathbf{a}^{\wedge}b$ and $\mathbf{a'}^{\wedge}b'$ have the same quantifier-free type.
\end{proof}

\section{Simple sets}\label{section_simple}

For all the rest of this paper, we consider $M$ a model of $\mathcal{T}_N$. It is generally understood that definable sets are subsets of some Cartesian power of $M$.

In this section, we will consider a subclass of the class of definable sets which we call "simple sets" and which will have the following three
properties:
\begin{enumerate}
\item Every simple set is in definable bijection with a Cartesian power of $M$.
\item The topology whose family of closed sets is generated by the simple sets is Noetherian and its irreducible closed sets are the simple sets.
\item Every definable set is a Boolean combination of simple sets.
\end{enumerate}

\begin{de}
Let $n\in \N_{>0}$.
A subset $A\subseteq M^n$ is said to be \textbf{simple} if it is definable by a conjunction of formulas of the form:
\begin{itemize}
\item $x_{i}=f^k(x_{j})$ where  $i, j, k\in \N$, $1\leq i,j \leq n$ and $\sigma$ is a permutation of $[\![ 1;n]\!] $; 
\item $x_i=c$ where $c\in M$ and $1\leq i\leq n$.
\end{itemize}
\end{de}

\begin{rem}\label{intersection_simples}
The intersection of two simple sets is a simple set. No simple set is the finite union of proper subsets that are simple.
\end{rem}
\begin{de}
Let $A\subseteq M^n$ be a simple set.
We define the set $C:=\lbrace i\in [\![ 1;n]\!]| \exists a\in M, \forall x\in A, x_i=a\rbrace$
and on $[\![ 1;n]\!] \smallsetminus C$, the equivalence relation : $i$ is equivalent to $j$ if and only if there exists $k\in \N$ such that, for every $x\in A$, $x_i=f^k(x_j)$ or $x_j=f^k(x_i)$. Let $I_1,\ldots, I_d$ be the equivalence classes.

Every equivalence class, $I$, contains an element $i$ (not necessarily unique) such that for every $j\in I$, there exists a positive integer $k\in \N$ such that $x_j=f^k(x_i)$. We call such an element a \textbf{determining element} of $I$.
We say that $C\sqcup (\bigsqcup_{i=1}^d I_i)$ is the \textbf{partition of $[\![ 1;n]\!] $ associated to $A$}, that $I_1,\ldots, I_d$ are the \textbf{equivalence classes of $A$} and that $C$ is the \textbf{set of constants} of $A$.
\end{de}

\begin{lem}\label{incl}
Let $A$ be a simple set with set of constants $C$ and equivalence classes $I_1,\ldots, I_d$.
Let $B$ be a simple set with set of constants $C'$.
If $A\subsetneq B$, then
\begin{itemize}
\item $C'\subseteq C$;
\item For every $1\leq j\leq d$, $I_j$ is a union of some of the equivalence classes of $B$;
\item $d<d'$.
\end{itemize}
\end{lem}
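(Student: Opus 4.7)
The plan is to compare the explicit defining data of the two simple sets and use the no-cycles hypothesis on $f$ to extract rigidity. Throughout I assume $A$ non-empty (the case $A = \varnothing$ is essentially trivial).

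For the first bullet, if $i \in C'$ then $x_i$ is forced to take some value $a \in M$ on all of $B$, hence also on all of $A \subseteq B$, so $i \in C$ (with the same constant). For the second bullet, I fix an equivalence class $I'$ of $B$ and show it is either entirely contained in $C$ or entirely contained in a single class $I_j$ of $A$. Given $i_1, i_2 \in I'$, an equation of the form $x_{i_2} = f^k(x_{i_1})$ or $x_{i_1} = f^k(x_{i_2})$ holds on $B$, hence on $A$; if one of the two coordinates is constant on $A$, the equation forces the other to be constant too, while otherwise the same equation shows $i_1 \sim_A i_2$, so the whole of $I'$ lies in a single $I_j$.

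For the third bullet, let $D$ be the number of $B$-equivalence classes absorbed into $C \setminus C'$, and let $m_j \ge 1$ be the number of $B$-classes grouped into $I_j$. Then $D + \sum_{j=1}^d m_j = d'$, so $d \le d'$ with equality if and only if $D = 0$ and each $m_j = 1$. In that extremal case $C = C'$ and the two simple sets share their partition, and I would fix a common system of determining elements $i(1), \ldots, i(d)$ to write the defining data of $A$ and $B$ as constants $a_i^A, a_i^B$ for $i \in C$ and offsets $k_j^A, k_j^B$ expressing each non-constant $x_j$ in terms of $x_{i(j)}$. The inclusion $A \subseteq B$ then forces $a_i^A = a_i^B$, and for each non-constant index $f^{k_j^A}(y) = f^{k_j^B}(y)$ for all $y \in M$ in an infinite set, which by the no-cycles hypothesis yields $k_j^A = k_j^B$. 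Hence $A = B$, contradicting $A \subsetneq B$, so the inequality $d < d'$ must be strict.

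The main obstacle is this last step: determining elements are not canonically unique, and an equation relating two coordinates can be written in either direction; one must choose compatible representatives in $A$ and $B$ and track both possible signs of the offsets. The no-cycles hypothesis is exactly the ingredient that makes the offsets rigid, because $f^m(y) = f^{m'}(y)$ for infinitely many $y$ forces $m = m'$.
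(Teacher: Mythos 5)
Your proof is correct and follows essentially the same route as the paper, which simply declares the first two bullets obvious and, for the third, silently uses exactly the rigidity you spell out: when $C'=C$, the no-cycles hypothesis forces two simple sets with the same constants and the same partition to coincide, so strict inclusion forces either $C'\subsetneq C$ or some class of $A$ to absorb at least two classes of $B$, whence $d<d'$. Your write-up is in fact more detailed than the paper's; the only caveat is the degenerate case $A=\varnothing$, which you, like the paper, implicitly set aside.
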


\begin{proof}
The first two properties are obvious.
If the inclusion $C'\subseteq C$ is strict, then it is clear that the number of equivalence classes of $A$ is strictly less than that of $B$.
If $C'=C$, then there exists $1\leq j\leq d'$ such that $I_j$ is the union of at least two equivalence classes of $B$. This implies that $d$ is strictly less than $d'$. Thus, in all cases, the number of equivalence classes of $A$ is strictly less than that of $B$.
\end{proof}

\begin{lem}
Every simple set is in definable bijection with some Cartesian power of $M$. More precisely if $A$ admits $d$ equivalence classes, then $A$ is in definable bijection with $M^d$.
\end{lem}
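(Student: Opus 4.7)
The plan is to construct an explicit definable bijection $\pi \colon A \to M^d$ by projecting onto the determining elements of the equivalence classes, with explicit inverse that rebuilds every other coordinate either as a fixed constant or as a prescribed $f$-iterate of one of those free parameters. Write $C$ for the set of constants of $A$ and $I_1,\ldots,I_d$ for its equivalence classes, and for each $l\in\{1,\ldots,d\}$ fix a determining element $i_l\in I_l$. By definition, for every $j\in I_l$ there is an integer $k_{l,j}\in\N$ with $x_j=f^{k_{l,j}}(x_{i_l})$ for all $x\in A$, and for every $j\in C$ there is an element $a_j\in M$ with $x_j=a_j$ for all $x\in A$ (the case $A=\emptyset$ being either excluded by convention or handled trivially).

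Then define
\[
\pi(x)=(x_{i_1},\ldots,x_{i_d}),\qquad \psi(y)_j=\begin{cases} a_j & \text{if } j\in C,\\ f^{k_{l,j}}(y_l) & \text{if } j\in I_l.\end{cases}
\]
Both maps are given by $L$-terms, hence definable. Injectivity of $\pi$ is immediate since every coordinate of $x\in A$ is either a constant or an $f$-iterate of some $x_{i_l}$, and $\pi\circ\psi=\Id_{M^d}$ because $\psi(y)_{i_l}=f^0(y_l)=y_l$; combined, these give $\psi\circ\pi=\Id_A$ too.

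The only delicate point, and essentially the whole content of the proof, is to check that $\psi(y)\in A$ for every $y\in M^d$, i.e.\ that each atomic conjunct of the defining formula of $A$ is satisfied by $\psi(y)$. A conjunct of the form $x_j=c$ forces $j\in C$ with $a_j=c$, and so holds on $\psi(y)$. For a conjunct of the form $x_i=f^k(x_j)$ I would run a short case analysis on the positions of $i,j$: the mixed case (one index in $C$, the other in some $I_l$) cannot occur, since applying $f^k$ to a constant yields a constant and would force the other index into $C$ as well, contradicting the construction of $C$ and the $I_l$; the case $i,j\in C$ is immediate; and when $i,j$ both lie in some $I_l$, evaluation at any $x\in A$ gives $f^{k_{l,i}}(x_{i_l})=f^{k+k_{l,j}}(x_{i_l})$, whence $k_{l,i}=k+k_{l,j}$ and the identity holds universally at $\psi(y)$.

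The main obstacle, and the only genuinely non-formal step, is this last extraction of equality of exponents from a single equation between $f$-iterates. This is where both hypotheses of $\mathcal{T}_N$ are used essentially: injectivity of $f$ cancels a common prefix, and the no-cycle axiom then forbids $f^m(x_{i_l})=x_{i_l}$ for any $m>0$, so the exponents must coincide. All other verifications reduce to bookkeeping on the partition $C\sqcup I_1\sqcup\cdots\sqcup I_d$ already set up before the statement.
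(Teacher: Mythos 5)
Your proposal is correct and follows essentially the same route as the paper: the paper's proof also sends $(x_1,\ldots,x_d)\in M^d$ to the unique element of $A$ whose coordinates at the determining positions are the $x_i$. You simply make explicit the existence half of that "unique element" claim (checking $\psi(y)\in A$ via injectivity and acyclicity of $f$), which the paper asserts without detail.
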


\begin{proof}
Let $A\subseteq M^n$ be a simple set.
Let $C\sqcup (\bigsqcup_{i=1}^d I_i)$ be the partition of $[\![ 1;n]\!] $ associated to $A$ and for each $ 1 \leq i \leq d$,
let $j_i$ be a determining element of $I_i$. Then for every $(x_1,\ldots, x_d) \in M^d$, there exists a unique element $(a_1,\ldots, a_n)$ of $A$ such that for every $1 \leq i\leq d$, one has $a_{j_i} = x_i$. The function that associates to every element of $M^d$ the unique element of $A$ satisfying these equalities is a definable bijection between $M^d$ and $A$.
\end{proof}

\begin{de}\label{dim}
Let $A\subseteq M^n$ be a simple set that admits $d$ equivalence classes. Then we say that $d$ is the \textbf{dimension} of $A$ and denote it by $\dim(A)$.
\\
Let $k\in \N^*$ and $X_1,\ldots, X_k$ be simple sets of respective dimensions $d_1,\ldots, d_k$.
Let $A:= \cup_{i=1}^k X_i$. Then we say that $\max_{i=1}^k d_i$ is the \textbf{dimension} of $A$ and denote it $\dim(A)$.
\end{de}

\begin{rem}\label{dimaj}~\\
\textbf{a}. It is immediate to notice that if $A$ and $B$ are two simple sets such that $A\subsetneq B$, then $\dim(A)<\dim(B)$.
\\
\textbf{b}. The dimension of $M^n$ is $n$.
\end{rem}

We prove below that for every integer $n\in \N_{>0}$, the simple subsets of $M^n$ are the irreducible closed sets of the topology on $M^n$ that they generate. Furthermore, this topology is Noetherian and the dimension defined above coincides with Krull dimension.
Let us first recall some basic definitions and results.

\begin{de}
Let $X$ be a topological space. We say that the topology on $X$ is \textbf{Noetherian} if 
there is no infinite descending chain of closed sets.
A closed subset is said \textbf{irreducible} if it is nonempty and cannot be written as the union of two proper closed subsets.
\end{de}

The following lemma is classical.

\begin{lem}
Let $X$ be a Noetherian space and let $F \subseteq X$ be closed. Then there are 
irreducible closed sets $F_1,\ldots, F_n$ such that $F = \bigcup_i F_i$. Moreover, this decomposition is unique up 
to permutation if there is no inclusion between the $F_i$.\qed
\end{lem}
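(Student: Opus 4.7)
The plan is to handle existence and uniqueness separately, both by arguments that exploit the Noetherian hypothesis directly rather than any specific feature of the ambient space.

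For existence, I would argue by Noetherian induction. Let $\mathcal{F}$ be the collection of closed subsets of $X$ that cannot be written as a finite union of irreducible closed sets, and suppose for contradiction that $\mathcal{F}$ is nonempty. Since $X$ is Noetherian, $\mathcal{F}$ has a minimal element $F$ (otherwise one could extract an infinite strictly descending chain). This $F$ cannot itself be irreducible, else the trivial decomposition $F=F$ would place it outside $\mathcal{F}$. Hence $F = F_1 \cup F_2$ for proper closed subsets $F_1, F_2 \subsetneq F$. By minimality of $F$, neither $F_1$ nor $F_2$ lies in $\mathcal{F}$, so each admits a finite irreducible decomposition; concatenating these decompositions expresses $F$ as a finite union of irreducibles, contradicting $F \in \mathcal{F}$. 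So every closed set admits such a decomposition, and by discarding any $F_i$ contained in some $F_j$ with $i\neq j$ we may assume no inclusions.

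For uniqueness, suppose $F = \bigcup_{i=1}^{n} F_i = \bigcup_{j=1}^{m} G_j$ are two decompositions into irreducible closed sets with no inclusions within each. Fix an index $i$. Then
\[
F_i = F_i \cap F = \bigcup_{j=1}^{m} (F_i \cap G_j),
\]
which is a finite union of closed subsets of the irreducible set $F_i$. Irreducibility forces $F_i = F_i \cap G_{j(i)}$ for some index $j(i)$, i.e.\ $F_i \subseteq G_{j(i)}$. Applying the same reasoning to $G_{j(i)}$ using the $F$-decomposition yields $G_{j(i)} \subseteq F_{i'}$ for some $i'$, hence $F_i \subseteq F_{i'}$. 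The no-inclusion hypothesis on the $F_k$ forces $i' = i$, whence $F_i = G_{j(i)}$. Symmetrically every $G_j$ equals some $F_{i(j)}$, so $i \mapsto j(i)$ is the desired bijection of index sets.

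The only point requiring any care is the minimality step in the existence proof: one has to invoke the Noetherian descending chain condition to conclude that the nonempty family $\mathcal{F}$ (if it existed) would admit a minimal element, since $\mathcal{F}$ is merely a set of closed subsets ordered by inclusion rather than a chain. Apart from that, the argument is purely formal, and I do not anticipate any obstacle specific to this setting.
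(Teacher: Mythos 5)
Your proof is correct and is the standard argument (Noetherian induction for existence, the irreducibility/no-inclusion argument for uniqueness); the paper itself gives no proof, citing the lemma as classical, so there is nothing to compare against. The only cosmetic point is the empty set, which is handled by the empty union since the paper's definition of irreducible requires nonemptiness.
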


\begin{de}
The subsets $F_i$ of the previous lemma are called the\textbf{ irreducible components} of $F$ and $F = \bigcup_i F_i$ is called the \textbf{decomposition into irreducible components} of~$F$.
\end{de}

\begin{de}
Let $X$ be a non-empty closed subset of a Noetherian topology.
The \textbf{dimension} of $X$ is defined by:
$$\dim_{top} (X)=\sup\lbrace n\in \N \mid \emptyset=Z_{-1}\subsetneq Z_0\subsetneq \ldots\subsetneq Z_n \subseteq X \text{ is a chain of irreducible closed sets} \rbrace.$$

By convention the dimension of the empty set is $-\infty$.
\end{de}

\begin{lem}\label{noe}
Let $m, n\in \N^*$ and for every $1\leq i\leq m$ (respectiveley every $1\leq i \leq n$) $X_i$ (respectively $Y_i$) a simple set.
Let us assume that $\cup_{i=1}^m X_i \subseteq \cup_{i=1}^n Y_i$.
Then for every $1\leq i\leq m$, there exists $1\leq \sigma(i)\leq n$ such that $X_i\subseteq Y_{\sigma(i)}$.
\end{lem}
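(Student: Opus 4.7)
The plan is to reduce the statement to the two facts packaged in Remark~\ref{intersection_simples}: the intersection of two simple sets is simple, and a simple set cannot be written as a finite union of proper simple subsets. Both facts together say that simple sets behave exactly like irreducible closed sets with respect to the covering $\bigcup_j Y_j$, which is precisely what the lemma asserts.

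More concretely, I would fix an index $i \in \{1,\ldots,m\}$ and treat $X_i$ in isolation. If $X_i$ is empty the conclusion is immediate, so assume $X_i$ is nonempty. From the hypothesis $X_i \subseteq \bigcup_{j=1}^n Y_j$ I would write
\[
X_i \;=\; X_i \cap \bigcup_{j=1}^n Y_j \;=\; \bigcup_{j=1}^n (X_i \cap Y_j).
\]
By the first half of Remark~\ref{intersection_simples}, each $X_i \cap Y_j$ is a simple set (possibly empty, which I am happy to treat as a degenerate simple set, or else discard from the union).

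Now I would argue by contradiction: suppose that for every $j$ one has $X_i \cap Y_j \subsetneq X_i$. Then $X_i$ is a finite union of simple subsets all of which are proper, directly contradicting the second half of Remark~\ref{intersection_simples}. Therefore there is some $j$ with $X_i \cap Y_j = X_i$, i.e.\ $X_i \subseteq Y_j$, and we set $\sigma(i):=j$. Doing this for every $i$ yields the required function $\sigma$.

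I do not expect a real obstacle here: the entire content of the lemma is already distilled into Remark~\ref{intersection_simples}. The only mildly delicate point is bookkeeping around the possibility that some $X_i \cap Y_j$ is empty, which is handled either by stipulating that the empty set counts as (a proper) simple subset, or by simply omitting the empty intersections from the union before invoking the second half of the remark on the nonempty ones that remain.
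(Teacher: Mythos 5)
Your proof is correct and follows exactly the paper's argument: write $X_i=\bigcup_j(X_i\cap Y_j)$ and invoke Remark~\ref{intersection_simples} (intersections of simple sets are simple; no simple set is a finite union of proper simple subsets) to conclude that some $X_i\cap Y_j$ equals $X_i$. The extra care about empty intersections is a reasonable refinement but does not change the substance.
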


\begin{proof}
Let $1\leq j\leq m$, then $X_j = \cup_{i=1}^n (X_j\cap Y_i)$.
Since the intersection of two simple sets is simple and since a simple set cannot be the finite union of proper subsets that are simple, this equality implies that there exists $1\leq \sigma(j)\leq n$ such that $X_j= X_j\cap Y_{\sigma(j)}$.
\end{proof}

\begin{lem}\label{topology_noeth}
Let $n\in \N_{>0}$.
The topology $\Top_n$ on $M^n$ whose family of closed sets is generated by the simple subsets of $M^n$ is Noetherian and its irreducible closed sets are the simple sets. 
\end{lem}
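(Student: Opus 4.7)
The plan is to reduce both claims---Noetherianity and the identification of irreducible closed sets---to a single combinatorial fact about the poset of finite unions of simple sets. First I would observe that by Remark \ref{intersection_simples} and distributivity, the class $\mathcal{C}$ of finite unions of simple subsets of $M^n$ is closed under finite unions and finite intersections. Lemma \ref{noe} implies that any two decompositions of $F\in\mathcal{C}$ as a finite union of simple sets admit mutual inclusions term-by-term, so removing the redundant terms (those contained in another) yields a unique decomposition $F=X_1\cup\cdots\cup X_m$ with no inclusions among the $X_j$, which I call its \emph{irredundant decomposition}.

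Next I would attach to each such $F$ the multiset $\mu(F)=\{\dim X_1,\ldots,\dim X_m\}$ of dimensions of the irredundant components, viewed as an element of the set of finite multisets over $\{0,1,\ldots,n\}$ ordered by the well-founded multiset order (Dershowitz--Manna). The key step is to verify that $G\subsetneq F$ in $\mathcal{C}$ implies $\mu(G)<\mu(F)$. Using Lemma \ref{noe}, I associate to each irredundant component $Y_\ell$ of $G$ an irredundant component $X_{\sigma(\ell)}$ of $F$ containing it. Grouping the $Y_\ell$ by the value $\sigma(\ell)$, each $X_j$ is either \emph{preserved} (exactly one $Y_\ell$ with $\sigma(\ell)=j$ and $Y_\ell=X_j$) or \emph{replaced} by a finite, possibly empty family of simple sets strictly contained in $X_j$; by Lemma \ref{incl} (see also Remark \ref{dimaj}) each such replacement has dimension strictly smaller than $\dim X_j$. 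Since $G\subsetneq F$, at least one $X_j$ is not preserved, so $\mu(G)$ is obtained from $\mu(F)$ by replacing one or more entries by multisets of strictly smaller entries, i.e.\ $\mu(G)<\mu(F)$.

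It follows that $\mathcal{C}$ satisfies the descending chain condition. To identify $\mathcal{C}$ with the full family of closed sets of $\Top_n$, I would note that any intersection $\bigcap_{i\in I}C_i$ of elements of $\mathcal{C}$ coincides with a finite sub-intersection, because the downward-directed family of finite sub-intersections lies in $\mathcal{C}$ and must stabilise by DCC; hence the closed sets of $\Top_n$ are exactly the finite unions of simple sets, and $\Top_n$ is Noetherian. Finally, for the identification of the irreducibles, a finite union $F=X_1\cup\cdots\cup X_m$ with $m\geq 2$ irredundant components splits into the two proper closed subsets $X_1$ and $\bigcup_{j\geq 2}X_j$; conversely, if a simple set $F$ equals $G_1\cup G_2$ with $G_i$ closed, expanding each $G_i$ into simple sets and invoking Remark \ref{intersection_simples} (no simple set is a union of proper simple subsets) forces some simple piece of some $G_i$ to equal $F$, hence $G_i=F$. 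The only non-formal step, and thus the main obstacle, is the strict multiset decrease: it is precisely there that the preparatory results on simple sets---uniqueness of the associated partition, the dimension drop of Lemma \ref{incl}, and the indecomposability clause of Remark \ref{intersection_simples}---come together simultaneously.
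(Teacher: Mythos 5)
Your argument is essentially the paper's: the multiset $\mu(F)$ of dimensions of the irredundant components, with the Dershowitz--Manna order, is exactly the paper's invariant $s(F)=\sum_i X^{\dim(X_i)}\in\N[X]$ with its (somewhat informally described) order, and the key step in both cases is that strict inclusion of closed sets forces a strict drop of this invariant via Lemma \ref{noe} and Remark \ref{dimaj}. Your formulation is if anything slightly more careful, since you also verify (via DCC on the directed family of finite sub-intersections) that arbitrary intersections of finite unions of simple sets are again finite unions of simple sets, a point the paper leaves implicit.
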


\begin{proof}
We start by a definition:
\begin{de}
Let $A$ be a closed set. Let $k\in \N^*$ and $X_1,\ldots, X_k$ be irreductible sets such that $A=\cup_{i=1}^k X_i$ and for every $i\neq j$, $X_i$ is not included in $X_j$. 
We associate to $A$, the element $s(A)$ of $\N[X]$ defined by $s(A):=\sum_{i=1}^k X^{\dim(X_i)}$.
\end{de}

We define on $\N[X]$ the natural order induced by the usual order on $\N$ and the relations, for every $i, j\in \N$, $X^i\leq X^j \Leftrightarrow i\leq j$. It is immediate to notice that any decreasing sequence of elements of $\N[X]$ is stationary.

\medskip
Let us first prove that if $A$ and $B$ are two closed sets, then $A\subsetneq B$, if and only if $s(A)< s(B)$.

Let $X_1,\ldots, X_k$ (respectively $Y_1,\ldots, Y_l$) be such that $A=\cup_{i=1}^k X_i$ (respectively $B=\cup_{i=1}^l Y_i$).
Since $A\subsetneq B$, by lemma \ref{noe}, for every $1\leq i\leq k$, there exists $1\leq \sigma(i)\leq l$ such that $X_i\subseteq Y_{\sigma(i)}$. Furthermore, either one of these inequalities is strict or $k<l$. By point \textbf{a} of remark \ref{dimaj}, if $X$ and $Y$ are two simple sets such that $X\subsetneq Y$, then $\dim(X)<\dim(Y)$. As a result, $s(A)< s(B)$.

\medskip
Let us now show that any decreasing chain of closed sets is stationary.
Let $(A_i)_{i\in \N}$ be such a chain. The sequence $(s(A_i))_{i\in \N}$ is a decreasing sequence of $\N[X]$. It is thus stationary. Hence so is the sequence $(A_i)_{i\in \N}$.

It remains to prove that the irreductible closed sets are the simple sets. It is obvious by definition that an irreductible closed set is a simple set. Converserly, as we have noticed in remark \ref{intersection_simples}, a simple set cannot be the finite union of proper subsets that are simple.
\end{proof}

For every integer $n\in \N_{>0}$, the dimension of a simple set as a closed set of $\Top_n$ coincides with the dimension defined above:

\begin{lem}\label{lem_defbijmd}
Let $n\in \N_{>0}$. 
Let $A\subseteq M^n$ be an infinite simple set whose dimension as a closed subset of $\Top_n$ is $d$. Then $A$ is in definable bijection with $M^d$. As a result $\dim(A)=\dim_{top}(A)$.
\end{lem}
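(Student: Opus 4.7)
The plan is to reduce the statement to showing that the combinatorial dimension $\dim(A)$ (the number of equivalence classes) coincides with the Krull dimension $\dim_{\mathrm{top}}(A)$; once this is established, the definable bijection $A \simeq M^d$ follows at once from the earlier lemma producing a definable bijection $A \simeq M^{\dim(A)}$.

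For the upper bound $\dim_{\mathrm{top}}(A) \le \dim(A)$, I would take an arbitrary chain of irreducible closed subsets $Z_0 \subsetneq Z_1 \subsetneq \ldots \subsetneq Z_k \subseteq A$. By Lemma \ref{topology_noeth}, each $Z_i$ is simple, and point \textbf{a} of Remark \ref{dimaj} forces $\dim(Z_0) < \dim(Z_1) < \ldots < \dim(Z_k) \le \dim(A)$, so $k \le \dim(A)$.

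For the lower bound I would build an explicit chain realizing each intermediate combinatorial dimension. Writing $d = \dim(A)$, let $C \sqcup \bigsqcup_{i=1}^d I_i$ be the partition associated to $A$, pick determining elements $j_1,\ldots,j_d$, and fix arbitrary $b_1,\ldots,b_d \in M$. For $0 \le k \le d$, set
\[
Z_k := \{x \in A : x_{j_{k+1}} = b_{k+1},\ \ldots,\ x_{j_d} = b_d\}.
\]
Each $Z_k$ is defined by adjoining to the simple description of $A$ finitely many equalities of the form $x_i = c$, so it is itself simple, hence irreducible by Lemma \ref{topology_noeth}. Since $A$ is freely parameterized by its determining elements (as used in the proof of the earlier lemma giving $A \simeq M^{\dim(A)}$), $Z_k$ is parameterized by $x_{j_1},\ldots,x_{j_k}$ ranging over $M^k$; in particular $Z_k$ is nonempty, and the inclusion $Z_{k-1} \subsetneq Z_k$ is strict because $x_{j_k}$ is fixed at $b_k$ in $Z_{k-1}$ but unconstrained in $Z_k$. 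The chain $Z_0 \subsetneq Z_1 \subsetneq \ldots \subsetneq Z_d = A$ witnesses $\dim_{\mathrm{top}}(A) \ge d$.

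Combining the two inequalities gives $\dim(A) = \dim_{\mathrm{top}}(A) = d$, and composing with the definable bijection $A \simeq M^{\dim(A)}$ yields the required definable bijection $A \simeq M^d$. I do not anticipate any serious obstacle: the only delicate point is verifying simplicity and strict containment of the $Z_k$, which reduces directly to the free parameterization of a simple set by its determining elements that was used in the preceding lemma.
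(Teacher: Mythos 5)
Your proposal is correct and follows essentially the same route as the paper: bound the length of any chain of irreducible (hence simple) closed sets by the number of equivalence classes via Remark \ref{dimaj}, exhibit a chain of that length, and then invoke the earlier lemma giving a definable bijection $A\simeq M^{\dim(A)}$. The only difference is that you make explicit the construction of the maximal chain (by successively freeing the determining coordinates), a step the paper dismisses as obvious.
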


\begin{proof}
Let $\emptyset=Z_{-1}\subsetneq Z_0\subsetneq \ldots\subsetneq Z_d =A$ be a chain of simple sets.
Then for every $-1 \leq i \leq d-1$, 
$Z_{i}$ has strictly fewer many equivalence classes than $Z_{i+1}$. 
A chain of simple sets included in $A$ thus has a length bounded by the cardinality of the partition associated to $A$.
It is obvious that a chain of this length exists. The dimension of $A$ as a closed set of $\Top_n$ is thus equal to its dimension as defined above.
\end{proof}

\begin{de}
Let $A\subseteq M^n$ be a definable subset and let $\bar{A}$ be its closure in $\Top_n$. We define $\dim(A):=\dim(\bar{A})$.
\end{de}

\begin{prop}\label{dim_inclu}
Let $A\subseteq M^n$ be a nonempty definable subset of dimension $d$. Then there exist nonempty finite subsets $F,F'$ of $M$ and definable injections $M^d\to F\times A$ and $A\to F'\times M^d$. 
\end{prop}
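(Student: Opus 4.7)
The plan is to prove the two required injections separately.

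For $A\to F'\times M^d$, I would decompose the closure $\bar A=S_1\cup\dots\cup S_k$ into its irreducible components, which by Lemma \ref{topology_noeth} are simple sets of dimensions $d_i\le d$. Each $S_i$ is in definable bijection with $M^{d_i}$ by Lemma \ref{lem_defbijmd}, and $M^{d_i}$ embeds into $M^d$ by fixing $d-d_i$ coordinates to a constant; let $\psi_i\colon S_i\to M^d$ be the resulting definable injection. Partitioning $A=\bigsqcup_i A_i$ with $A_i:=(A\cap S_i)\setminus\bigcup_{j<i}S_j$ and picking distinct tags $e_1,\dots,e_k\in M$, the map sending $x\in A_i$ to $(e_i,\psi_i(x))$ is the desired definable injection $A\to\{e_1,\dots,e_k\}\times M^d$.

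For the reverse injection $M^d\to F\times A$, I would fix an irreducible component $S\subseteq\bar A$ of dimension $d$ (so $S\cong M^d$). By quantifier elimination (Proposition \ref{prop_qe}), $A$ is a Boolean combination of simple sets, hence constructible in $\Top_n$. Moreover $A\cap S$ must be dense in $S$: otherwise $\overline{A\cap S}\cup\bigcup_{j\ne 1}S_j$ would be a closed set containing $A$ yet not containing $S$ (by irreducibility of $S$ and the absence of inclusions between components), contradicting $S\subseteq\bar A$. A standard Noetherian argument then produces a nonempty open $V$ of $S$ contained in $A$: writing $A\cap S=\bigsqcup_i L_i$ as a disjoint union of locally closed pieces $L_i=U_i\cap C_i$, some $L_i$ satisfies $\overline{L_i}=S$ by irreducibility, which forces $C_i\supseteq S$ and hence $L_i=U_i\cap S$, a nonempty open of $S$. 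Under the identification $S\cong M^d$, $V$ corresponds to $M^d\setminus T$ for a proper closed $T\subsetneq M^d$, so the task reduces to the following key lemma: \emph{for every proper closed $T\subsetneq M^d$ there exists a definable injection $M^d\to F\times(M^d\setminus T)$ for some finite $F\subseteq M$}.

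I would establish this lemma by induction on $d$. The base case $d=1$ is immediate since $T$ is then finite: a tag trick sending each point of $T$ to a fixed safe value, with extra tags keeping track of the source, produces the injection. For the inductive step, I would classify each simple component $T_j$ of $T$ by whether it constrains $x_d$: the \emph{unconstrained} components have the form $Y_j\times M$ with $Y_j$ simple in $M^{d-1}$, and the identity $\dim(Y_j\times M)=\dim(Y_j)+1$ forces $Y:=\bigcup_j Y_j$ to satisfy $\dim Y\le d-2$, so $Y\subsetneq M^{d-1}$ is proper closed. The inductive hypothesis then supplies $M^{d-1}\to F_0\times(M^{d-1}\setminus Y)$. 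For every $y\in M^{d-1}\setminus Y$ the fibre $T_y:=\{z\in M:(y,z)\in T\}$ is finite, of size bounded by the number $m$ of $x_d$-constrained components of $T$, uniformly in $y$. A uniform fibre-wise tag trick then produces an injection $(M^{d-1}\setminus Y)\times M\to F_1\times(M^d\setminus T)$; the key point is the definition of the ``safe value'' $\beta(y)\in M\setminus T_y$, for which I would consider the $m+1$ candidates $f^l(y_1)$ for $l=0,\dots,m$. These are pairwise distinct by the no-cycles hypothesis on $f$, so at least one of them lies outside the $m$-element set $T_y$, and I can set $\beta(y)$ to be the first such candidate. Composing the two injections and re-encoding $F_0\times F_1$ as a single finite subset of $M$ yields the lemma, hence the proposition. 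The main obstacle will be to verify in detail that the various possible shapes of constraints on $x_d$ (fixed constant, $f^k$ of another coordinate, or $f^{-k}$ of another coordinate where defined) combine into a single definable injection with the correct uniform tag alphabet; this case analysis is somewhat intricate but relies only on elementary properties of simple sets.
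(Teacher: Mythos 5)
Your split into the two injections and your treatment of $A\to F'\times M^d$ coincide with the paper's (pass to the closure, take irreducible components, use Lemma \ref{lem_defbijmd} to map each onto some $M^{d_i}\hookrightarrow M^d$, and separate the pieces with tags). For the harder direction $M^d\to F\times A$ you take a genuinely different route. Both arguments begin by locating inside $A$ a set of the form $S\smallsetminus T$ with $S$ simple of dimension $d$ and $T$ a proper closed subset of $S$; your density-plus-locally-closed argument is an expanded version of the paper's one-line "Thus $A$ contains a subset of the form $B\smallsetminus\bigcup_{i=1}^{k}B_i$". From there the paper asserts that $M^d\smallsetminus(F\times M^{d-1})$ with $|F|=k$ embeds into $B\smallsetminus\bigcup_{i=1}^{k}B_i$, then splits $M^d$ into $M^d\smallsetminus(F\times M^{d-1})$ and the $k$ slabs $\{a_i\}\times M^{d-1}$, re-injecting each slab through a copy of $M^{d-1}$ inside $M^d\smallsetminus(F\times M^{d-1})$. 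You instead prove a self-contained lemma by induction on $d$: $M^d$ embeds into $F\times(M^d\smallsetminus T)$ for every proper closed $T\subsetneq M^d$, using a fibre-wise tag trick whose "safe value" $\beta(y)=f^{l}(y_1)$ is selected among $m+1$ iterates that are pairwise distinct exactly because $f$ has no cycles. Your route is longer, but it actually substantiates the step the paper leaves implicit --- it is not obvious how to inject $M^d\smallsetminus(F\times M^{d-1})$ into $M^d$ minus, say, the diagonal-type set $\{(x,f(x))\}$ without some such fibre-wise device --- so it buys rigour at the price of the case analysis on the shapes of the $x_d$-constraints that you flag at the end, while the paper's construction buys brevity by handling the excised lower-dimensional pieces globally rather than fibre by fibre.
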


\begin{proof}
Because $A$ has dimension $d$, its closure contains a (simple) subset $B$, in definable bijection with $M^d$. Thus $A$ contains a subset of the form 
$B\smallsetminus\bigcup_{i=1}^k B_i$, where all $B_i$ are simple sets of dimension $\le d-1$. Therefore, by lemma \ref{lem_defbijmd}, $M^d\smallsetminus (F\times M^{d-1})$ with $|F|=k$ embeds in $A$ via a definable injection, say $h_1$. Let us notice that $M^{d-1}$ embeds definably into $M^d\smallsetminus (F\times M^{d-1})$, and hence into $A$. Call $h_2$ this last injection from $M^d\smallsetminus (F\times M^{d-1})$ to $A$. Take $b_in M\setminus F$. Define the injection $g:M^d\to (F\sqcup\{b\})\times A$ by: $g(x)=(b,h_1(x))$ for $x\in M^d\smallsetminus (F\times M^{d-1})$, and for $x=(q,y)\in F\times M^{d-1}$, define $g(x)=(q,h_2(y))$.

For the second injection, we can suppose $A$ to be closed. So $A$ is a finite union of $\ell$ simple sets of dimension $\le d$, each of which being, by Lemma \ref{lem_defbijmd}, in definable bijection with $M^{d'}$ for some $d'\le d$. Hence $A$ embeds definably into $F'\times M^d$ with $|F'|=\ell$. 
\end{proof}

In \cite{Sch}, Schanuel defines the Burnside rig (commutative ring without negatives) associated to a distributive category, $\mathcal{C}$, in a very similar way that one defines the Grothendieck semi-ring of a structure. It consists of the isomorphism classes of objects of $\mathcal{C}$, endowed with the additive law induced by the coproduct and the multiplicative law induced by the product. He computes the Burnside rig of some categories (namely the category of bounded polyhedra, the category of unbounded polyhedra, the category of semi-algebraic sets).
To do so he proves that every isomorphism class of the specific categories he works in is entirely determined by its image under two maps:
\begin{itemize}
\item its image in the Grothendieck ring (though he does not call it by that name) and
\item its image in the dimension ring defined as below.
\end{itemize}
In \cite{SK}, Scanlon and Krajicek computed, in a very similar way, the Grothendieck ring of real closed fields to be isomorphic to $\Z$.

\begin{de}\label{de_dimsemi}The \textbf{dimension semi-ring} \cite{Sch} of a structure $M$ is defined as follows: for $A,B\in \widetilde{\operatorname {Def}}(M)$, write $A\preceq B$ if there exists a definable injection $A\to B$. Write $A\precsim B$ if there exists $n\in\N$ such that $A\preceq nB$. Finally the dimension semi-ring of $M$ is the quotient of $\widetilde{\operatorname {Def}}(M)$ by the relation identifying $A$ and $B$ if $A\precsim B\precsim A$. 
\end{de}

\begin{cor}
\label{cor_dimsem}
The dimension semi-ring of our structure $M$ is isomorphic to $(\{-\infty\}\cup\N,\max,+)$, the isomorphism being induced by mapping a definable set to its dimension.
\end{cor}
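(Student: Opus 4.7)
The plan is to realize the dimension semi-ring as the image of the map $\alpha\colon d \mapsto [M^d]$ (with $-\infty \mapsto [\emptyset]$) from $(\{-\infty\}\cup\N,\max,+)$, and to show $\alpha$ is a semi-ring isomorphism whose inverse is induced by $\dim$. The main input is Proposition \ref{dim_inclu}, which asserts $M^d \precsim A \precsim M^d$ for any nonempty definable $A$ of dimension $d$. This identifies every class of the dimension semi-ring with some $[M^d]$, yielding surjectivity of $\alpha$, and reduces well-definedness of $[A] \mapsto \dim A$ to the injectivity of $\alpha$, namely that $[M^a] = [M^b]$ forces $a = b$.

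That $\alpha$ is a semi-ring homomorphism is routine. Multiplicativity is $M^a \times M^b = M^{a+b}$. For additivity, assuming $a \le b$, the definable injection $(x_1,\ldots,x_a) \mapsto (x_1,\ldots,x_a,p,\ldots,p)$ for any fixed $p \in M$ gives $M^a \preceq M^b$, hence $M^a \sqcup M^b \preceq 2 M^b \precsim M^b$; the reverse $\precsim$ is obvious, so $[M^a \sqcup M^b] = [M^{\max(a,b)}]$.

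The main obstacle is injectivity of $\alpha$: when $a > b \ge 0$, no definable injection $\phi \colon M^a \to n M^b$ may exist, whatever $n$. I plan to combine quantifier elimination (Proposition \ref{prop_qe}) with the Noetherian topology of Section \ref{section_simple}. Partitioning $M^a$ by which of the $n$ copies of $M^b$ each point is sent to, irreducibility of $M^a$ in $\Top_a$ forces one piece $D$ to be dense. The graph $\Gamma$ of $\phi|_D$ has closure in $\Top_{a+b}$ decomposing into finitely many simple sets $S_1,\ldots,S_r$; picking $S = S_i$ with $\overline{\pi_1(\Gamma \cap S)} = M^a$ (which exists since $M^a$ is irreducible and is the union of the $\overline{\pi_1(\Gamma \cap S_i)}$), uniqueness of irreducible decomposition yields $\overline{\Gamma \cap S} = S$. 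Since $C := \pi_1(\Gamma \cap S)$ is dense in $M^a$, the partition of $\{x_1,\ldots,x_a,y_1,\ldots,y_b\}$ associated to $S$ can impose no relation purely among the $x_i$'s, so each $x_i$ alone constitutes the $x$-part of its equivalence class of $S$. A density/dimension comparison (any $y$-only equivalence class of $S$ would cut $\overline{\Gamma \cap S}$ strictly below $S$) forces each $y_j$ either to be constant or to lie in the equivalence class of a single $x_{\sigma(j)}$, so that $\phi|_C$ is a tuple of terms in at most $b$ of the variables $x_1,\ldots,x_a$. Since $b < a$, some variable $x_l$ is unused.

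To finish, $M^a \smallsetminus C$ sits inside a proper closed subset $E \subsetneq M^a$, and inspection of the simple-set generators of $\Top_a$ shows that the $x_l$-fiber of $E$ above a generic tuple $(x_i)_{i \ne l}$ is finite, so that of $C$ is cofinite in $M$. On this fiber $\phi$ is constant (since $x_l$ does not appear among the arguments of the $\phi_j$), contradicting injectivity of $\phi$. This closes the injectivity of $\alpha$, and together with the preceding paragraphs shows that $\alpha$ is a semi-ring isomorphism whose inverse is the map $[A] \mapsto \dim A$.
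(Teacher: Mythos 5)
Your proposal is correct and reaches the statement by a genuinely different route from the paper. The paper derives both directions of ``$\dim A\le \dim B$ iff $A\precsim B$'' from Proposition \ref{dim_inclu} alone, and disposes of the crucial case by saying that a definable injection $M^d\to F_1\times F_2\times F\times M^{d-1}$ ``contradicts the fact that $M^d$ has dimension $d$''; making that contradiction explicit requires knowing that definable injections cannot raise dimension, which in the paper's logical order is only fully justified by the normal-function machinery of the later sections (Lemmas \ref{im} and \ref{beau}). You instead prove the key non-embedding statement --- no definable injection $M^a\to nM^b$ for $a>b$ --- directly from quantifier elimination and the Noetherian topology: dense piece $D$, graph closure, an irreducible component $S$ with $\overline{\pi_1(\Gamma\cap S)}=M^a$, the analysis of the equivalence classes of $S$, an unused variable $x_l$, and a two-point $x_l$-fiber on which $\phi$ is constant. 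This buys a self-contained proof at this point of the paper, at the cost of length. The one step you should flesh out is the parenthetical ``density/dimension comparison'' ruling out $y$-only equivalence classes of $S$: the clean justification is that by quantifier elimination $\phi|_D$ is piecewise given by terms (piecewise normal in the paper's later terminology), so $\Gamma$ is covered by finitely many simple sets in which every $y_j$ is constant or $f^{\pm k}$-related to some $x_i$, and hence so is every irreducible component of $\overline{\Gamma}$; a purely fiberwise cardinality argument inside $\Top_{a+b}$ is more delicate than your phrasing suggests. With that step made explicit, the argument is complete (including the degenerate cases $b=0$ and $a=1$, which your final fiber argument handles).
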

\begin{proof}
Let $A\subseteq M^n$ and $B\subseteq M^m$ be definable subsets, of dimension $d,d'$.

We have to show that $d\le d'$ if and only if there exist a finite set $F$ and a definable injection $A\to F\times B$. We can suppose that $d,d'\ge 0$.

If $d\le d'$, by Proposition \ref{dim_inclu} there exist finite subsets $F_1, F_2$ of $M$ and definable injections $A\to F_1\times M^d$ and $F_2\times M^{d'}\to B$. So $F_1\times F_2\times M^d$ embeds definably into $F_1\times B$, and hence by composition $A$ embeds definably into $F_1\times B$.

Conversely if $A$ embeds definably into $F\times B$, suppose for contradiction that $d>d'$. By Proposition \ref{dim_inclu}, there exist definable injections $B\to F_1\times M^{d-1}$ and $M^d\to F_2\times A$. By composing suitably, we obtain a definable injection from $M^d$ into $F_1\times F_2\times F\times M^{d-1}$. This contradicts the fact that $M^d$ has dimension $d$.
\end{proof}

\section{Decomposition of definable sets in simple sets}

By quantifiers eliminations, every definable set is a Boolean combination of simple sets.
We refine this in the following lemma.

\begin{lem}\label{lem_boolsim}
Every definable set is a Boolean combination of simple sets. More precisely, if $A$ is a definable set, then there exists a finite Boolean combination, $\bigsqcup_{i\in I} (B_i\smallsetminus (\bigcup_{j\in I_i} B_{i,j}))$ such that:
\begin{itemize}
\item for every $i, j$, $B_{i,j}\subsetneq B_i$ are simple sets;
\item $A$ is equal to the disjoint union $\bigsqcup_{i\in I} (B_i\smallsetminus \bigcup_{j\in I_i} B_{i,j})$.
\end{itemize}
\end{lem}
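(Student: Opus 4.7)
The strategy is to combine quantifier elimination with the standard passage to atoms of the finite Boolean algebra generated by the simple sets appearing in a defining formula for $A$.

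First, by Proposition~\ref{prop_qe}, each atomic formula---being of the form $f^s(x_i) = f^t(x_j)$ or $f^s(x_i) = c$---defines a simple set, so $A$ lies in the Boolean algebra generated by some finite family $S_1,\ldots,S_m$ of simple sets. It can therefore be written as the disjoint union of those atoms of this algebra that it contains:
\[
A \;=\; \bigsqcup_{I \in \mathcal{I}} \Bigl( \bigcap_{i \in I} S_i \,\cap\, \bigcap_{i \notin I} S_i^{c} \Bigr),
\qquad \mathcal{I} \subseteq 2^{\{1,\ldots,m\}}.
\]

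Next, for each $I \in \mathcal{I}$, I would set $S_I := \bigcap_{i \in I} S_i$, with the convention $S_\varnothing = M^n$ (itself simple, defined by the empty conjunction). By Remark~\ref{intersection_simples}, both $S_I$ and each $S_I \cap S_i$ are simple, and the atom indexed by $I$ rewrites as $S_I \smallsetminus \bigcup_{i \notin I}(S_I \cap S_i)$. For every $i \notin I$, three cases arise: if $S_I \cap S_i = S_I$, then $S_I \subseteq S_i$, so the atom is empty and $I$ is discarded from $\mathcal{I}$; if $S_I \cap S_i = \varnothing$, the index $i$ contributes nothing to the inner union and may be dropped; otherwise $S_I \cap S_i \subsetneq S_I$, which is exactly the strict inclusion the statement requires. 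After this cleanup, $A$ is written in the prescribed form.

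I do not expect a serious obstacle: disjointness is built into the passage through atoms, and the strictness $B_{i,j} \subsetneq B_i$ is secured by the trichotomy above. The essential input is simply that the class of simple sets is closed under finite intersection (Remark~\ref{intersection_simples}), which is what keeps the atoms inside this class; no appeal to the Noetherianity of $\mathrm{Top}_n$ from Lemma~\ref{topology_noeth} is needed.
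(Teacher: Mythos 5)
Your proof is correct, and it takes a genuinely different route from the paper's. The paper starts from an arbitrary representation $A=\bigcup_{i}(B_i\smallsetminus\bigcup_j B_{i,j})$ supplied by quantifier elimination and then forces disjointness by an iterative repair: whenever two pieces overlap, it carves the intersection $C=B_i\cap B_{i'}$ out of one piece and adds a new piece supported on $C$, and it must then argue that this process terminates, which it does by invoking the absence of infinite descending chains of simple sets (the Noetherianity underlying Lemma \ref{topology_noeth}). You instead pass to the atoms $\bigcap_{i\in I}S_i\cap\bigcap_{i\notin I}S_i^c$ of the finite Boolean algebra generated by the simple sets occurring in a quantifier-free defining formula; disjointness is then automatic and no termination argument is needed, the only inputs being Proposition \ref{prop_qe} and closure of simple sets under finite intersection (Remark \ref{intersection_simples}). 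Your trichotomy correctly secures the strictness $B_{i,j}\subsetneq B_i$ and discards empty atoms. The trade-off is purely quantitative: the atom decomposition may produce up to $2^m$ pieces where the paper's procedure can be more economical, but nothing in the sequel depends on the size of the index set. One small point you gloss over at the same level of informality as the paper: an atomic formula $f^s(x_i)=f^t(x_j)$ or $f^s(x_i)=c$ is only \emph{equivalent} to a simple formula after using injectivity of $f$ (and may degenerate to $\varnothing$ or $M^n$, e.g.\ when $c\notin f^s(M)$); these degenerate cases are harmless and are absorbed by your cleanup step.
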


\begin{proof}
Because the intersection of simple sets is a simple set, it is an immediate consequence of quantifier elimination (Proposition \ref{prop_qe}) that we can write $A=\bigcup_{i\in I} (B_i\smallsetminus \bigcup_{j\in I_i} B_{i,j})$ such that for every $i, j$, $B_{i,j}, B_i$ are simple sets and $B_{i,j}\subsetneq B_i$.

Let us show that we can modify the union indexed by $I$ in order to obtain a disjoint union. If there exist $ i, i'$ such that $(B_i \smallsetminus \bigcup_{j\in I_i}  B_{ i,j}) \cap (B_{i'} \smallsetminus \bigcup_{j\in I_{i'}}  B_{i', j })\neq \emptyset $, then $ B_i \cap B_{i'} \neq \emptyset $. Let $ C: = B_i \cap B_{i'} $. The set $C$ is simple and $ (B_i \smallsetminus (\bigcup_{j\in I_i}  B_{ i,j} \cup C)) \cap (B_{i'} \smallsetminus \bigcup_{j\in I_{i'}}  B_{i', j }) = \emptyset $.
Let us notice that
\begin{align*}&(B_i \smallsetminus \bigcup_{j\in I_i}  B_{ i,j})\cup (B_{i'} \smallsetminus \bigcup_{j\in I_{i'}}  B_{i', j })\\
&=(B_i \smallsetminus (\bigcup_{j\in I_i}  B_{ i,j} \cup C))\cup (B_{i'} \smallsetminus \bigcup_{j\in I_{i'}}  B_{i', j })\cup (C\cap (B_i\smallsetminus \bigcup_{j\in I_i}  B_{ i,j}))\\
&=(B_i\smallsetminus (\bigcup_{j\in I_i}  B_{ i,j} \cup C))\cup (B_{i'} \smallsetminus \bigcup_{j\in I_{i'}}  B_{i', j }) \cup ((C\cap B_i)\smallsetminus  \bigcup_{j\in I_{i'}}  (C\cap B_{i', j })).
\end{align*}

We can thus modify the partition as follows:
\begin{itemize}
\item we replace $B_{i} \smallsetminus (\bigcup_{j\in I_{i}}  B_{i, j })$ by $B_i\smallsetminus (\bigcup_{j\in I_i}  B_{ i,j} \cup C)$;
\item we add $C\smallsetminus  (\bigcup_{j\in I_{i'}} C\cap B_{i', j })$.
\end{itemize}

This construction cannot be repeated indefinitely as there is no infinite chain of simple sets.
By repeating this as many times as necessary,
we obtain a disjoint union.
\end{proof}

Consider the natural homomorphism $\tilde{\pi}$ from the polynomial ring $\Z[X]$ to $K_0(M)$ mapping $X$ to the class $[M]$ of $M$ and $1$ to the class of the singleton. Since $M$ is in definable bijection with $M$ deprived of $N$ points, we have $N=0$ in $K_0(M)$ and therefore $\tilde{\pi}$ factors through a ring homomorphism \[\pi:(\Z/N\Z)[X]\to K_0(M).\]

Thanks to the previous lemma, to prove that $\pi$ is surjective it is enough to prove that for any simple set $B$ and any simple sets $B_1,\ldots, B_n$ included in $B$, the class of $B\setminus \cup_{i=1}^n B_i$ is a Boolean combination of $[B]$ and of the $[B_i]$.

This is the point of the next lemma. We start by defining specific representants of a class in $K_0(M)$.

\begin{de}
Let $n\in \N$. We say that a subset $A$ of $M^{n+1}$ is \textbf{basic} if it is a singleton or if, for every $0\leq k\leq n$, there exist a singleton $S_k \subseteq M^{n-k}$ and a finite set $F_k\subsetneq M$ (possibly empty) such that
 $$A:=\bigsqcup_{k=0}^n F_k \times M^k \times S_k.$$ 
 
We define its \textbf{associated polynom} as $P(X):=\sum_{k=0}^d |F_k| X^k$ where for evey $0\leq k\leq n$, $|F_k|$ is the cardinality of $F_k$.

\medskip
Conversely, if $P(X):=\sum_{k=0}^d a_k X^k \in \N[X]$, then we call\textbf{ basic set associated to $P(X)$} any basic set of the form $A:=\bigsqcup_{k=0}^d F_k \times M^k \times S_k$ where $F_k$ is a finite subset of $M$ of cardinality $a_k$ and $S_k$ a singleton of $M^{d-k}$.
\end{de}
 
\begin{lem}\label{triv}
Let $A=\bigsqcup_{i\in I}(B_i\smallsetminus \bigcup_{j\in I_i} B_{i,j})$ where 
\begin{itemize}
\item for every $i\in I$, $B_i$ is a simple set,
\item for every $j\in I_i$, $B_{i,j}\subsetneq B_i$ is a simple set.
\end{itemize}
\begin{enumerate}[label=(\alph*)]
\item\label{item3_a} Then, following notations of definition \ref{GR}, $[A]$ is a linear combination over $\Z$ of 
the $[B_i]$ and of the $[\bigcap_{j\in K} B_{i,j}]$ where $i\in I$ and for every $i\in I$, $K$ ranges over the subsets of $I_i$.

\item\label{item3_b} In particular, for any definable set $A$, there exist $N$ and $P$, basic sets disjoint from $A$ and of dimension at most $\dim(A)$, such that $A\sqcup N$ is in definable bijection with $P$.
Furthermore, for any definable set $X$, we can choose $P$ and $N$ to be disjoint from $X$.
\end{enumerate}
\end{lem}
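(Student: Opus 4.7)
The plan for \ref{item3_a} is a direct application of inclusion-exclusion inside the Grothendieck ring. From the fundamental semi-ring identity $[U\cup V]+[U\cap V]=[U]+[V]$, a short induction on $|I_i|$ yields
\[
\left[\bigcup_{j\in I_i}B_{i,j}\right]=\sum_{\emptyset\neq K\subseteq I_i}(-1)^{|K|+1}\left[\bigcap_{j\in K}B_{i,j}\right]
\]
in $K_0(M)$. Since $\bigcup_{j\in I_i}B_{i,j}\subseteq B_i$, one has $[B_i\smallsetminus\bigcup_{j\in I_i}B_{i,j}]=[B_i]-[\bigcup_{j\in I_i}B_{i,j}]$, which is the required $\Z$-linear combination; summing over the disjoint pieces $i\in I$ gives \ref{item3_a}.

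For \ref{item3_b}, I would proceed by induction on $\dim(A)$, strengthening the statement to allow for simultaneous avoidance of an arbitrary auxiliary definable set $X$. The base case $\dim(A)\le 0$ is immediate: $A$ is then a finite union of singletons, so take $N=\emptyset$ and $P$ any disjoint finite set of the same cardinality (a basic set of dimension $0$). This is always possible since $M$ is infinite.

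For the inductive step, invoke Lemma~\ref{lem_boolsim} to write $A=\bigsqcup_{i\in I}(B_i\smallsetminus D_i)$ with $D_i:=\bigcup_{j\in I_i}B_{i,j}$. By Remark~\ref{dimaj}, $\dim(D_i)<\dim(B_i)\le\dim(A)$. Apply the inductive hypothesis to each $D_i$ (feeding in the avoidance set $A\cup X$ augmented by everything introduced in previous recursive calls), producing basic sets $N_i,P_i$ of dimension strictly less than $\dim(A)$, disjoint from $A\cup X$, together with a definable bijection $D_i\sqcup N_i\cong P_i$. Since $B_i=(B_i\smallsetminus D_i)\sqcup D_i$, this bijection extends to $B_i\sqcup N_i\cong (B_i\smallsetminus D_i)\sqcup P_i$. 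Next, using Lemma~\ref{lem_defbijmd}, pick a basic set $R_i$ in definable bijection with $B_i$ (and hence of the same dimension $d_i\le\dim(A)$), choosing the defining constants of $R_i$ outside every object built so far; this yields $(B_i\smallsetminus D_i)\sqcup P_i\cong R_i\sqcup N_i$. Taking the disjoint union over $i\in I$,
\[
A\sqcup\bigsqcup_{i\in I}P_i\;\cong\;\bigsqcup_{i\in I}(R_i\sqcup N_i),
\]
and a final realisation of each side as a single basic set --- read off its associated polynomial and re-choose constants disjoint from $A\cup X$ --- delivers the desired $N$ and $P$. Both are basic, disjoint from $A\cup X$, and of dimension $\le\dim(A)$.

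The main obstacle is purely bookkeeping: at each recursive call we create new basic sets whose constant coordinates must be chosen so as not to clash with any previously constructed data (in particular with $A$, with $X$, and with the $N_{i'},P_{i'},R_{i'}$ for $i'\neq i$). All such bad configurations are either finite or of strictly smaller dimension than the piece currently being processed, and $M$ is infinite, so the choices can always be made; the delicate point is to enlarge the avoidance set at every recursive step so that disjointness is preserved when the partial results are reassembled into the final $N$ and $P$.
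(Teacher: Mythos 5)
Your part (a) is essentially the paper's argument: both come down to the identity $[U\cup V]=[U]+[V]-[U\cap V]$ together with the fact that simple sets are stable under intersection; you write the closed-form inclusion--exclusion sum where the paper runs two inductions on the index sets, but the content is the same. Part (b) is where you genuinely diverge. The paper deduces (b) from (a) in one line: the class $[A]$ equals $\sum_k a_kT^k$, one takes $P$ and $N$ to be basic sets realizing the positive and negative coefficients, and concludes $[A\sqcup N]=[P]$. You instead construct the definable bijection directly, by induction on $\dim(A)$: apply the hypothesis to the lower-dimensional sets $D_i=\bigcup_{j}B_{i,j}$, extend the resulting bijections by the identity on $B_i\smallsetminus D_i$, trade each $B_i$ for a basic set $R_i$ via Lemma \ref{lem_defbijmd}, and reassemble. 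This is a legitimate and in fact stronger route: an equality of classes obtained through a computation in $K_0(M)$ with negative coefficients only yields a bijection after adding an unspecified definable set to both sides, whereas the statement (and its later uses, e.g.\ in Lemma \ref{beau} and the final injectivity argument) requires an honest definable bijection $A\sqcup N\cong P$; your induction supplies exactly that. Two bookkeeping points, neither fatal: the avoidance set fed into the recursive calls must contain $\overline{A}=\bigcup_i B_i$ and not merely $A$, since $N_i$ has to be disjoint from all of $B_i$ (not just from $B_i\smallsetminus D_i$) for the passage from $D_i\sqcup N_i\cong P_i$ to $B_i\sqcup N_i\cong(B_i\smallsetminus D_i)\sqcup P_i$ to make sense as a disjoint union; and the final step uses silently that a finite disjoint union of pairwise disjoint basic sets is in definable bijection with a single basic set whose polynomial is the sum, which is true and used elsewhere in the paper but should be said.
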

\begin{proof}
\ref{item3_a} Let us first prove the result in the case where for every $i\in I$, the set $I_i$ is empty. We proceed by induction on the cardinality of $I$.
The case where $I$ has cardinality $1$ is trivial.

Let $n\in \N^*$ and let us assume that the result is proved for any set $I$ of cardinality at most $n$.
By definition, $[\bigcup_{i=1}^{n+1} B_i]=[B_{n+1}]+[\bigcup_{i=1}^{n} B_i]-[\bigcup_{i=1}^{n} (B_{n+1}\cap B_i)].$
Since the intersection of two simple sets is still a simple set, the induction hypothesis applies on $[\bigcup_{i=1}^{n} B_i]$ and $[\bigcup_{i=1}^{n} (B_{n+1}\cap B_i)]$ and allows us to conclude.

\medskip
\noindent
Now, let us prove the general result by induction on the cardinality of $I$.
Let us assume that $I$ has only one element that we denote $i$.
By definition, $[B\smallsetminus \bigcup_{j\in I_i} B_{i,j}]$ is equal to $[B] - [ \bigcup_{j\in I_i} B_{i,j}]$. By what preceeds $[ \bigcup_{j\in I_i} B_{i,j}]$ is a linear combination over $\Z$ of 
the $[\bigcap_{j\in K} B_{i,j}]$ where $K$ ranges over the subsets of $I_i$. 

Let $n\in \N^*$.
Let us assume that the result is true for any set of cardinality $n$.
By definition, $[\bigsqcup_{i=1}^{n+1}(B_i\smallsetminus \bigcup_{j\in I_i} B_{i,j})]$ is equal to $[\bigsqcup_{i=1}^{n}(B_i\smallsetminus \bigcup_{j\in I_i} B_{i,j})]+[B_{n+1}\smallsetminus \bigcup_{j\in I_{n+1}} B_{i,j}]-
[B_{n+1}\smallsetminus  (\bigcup_{j\in I_{n+1}} B_{i,j} \cap \bigsqcup_{i=1}^{n}(B_i\smallsetminus \bigcup_{j\in I_i} B_{i,j}))]$.

Let us notice that \\
$(B_{n+1}\smallsetminus \bigcup_{j\in I_{n+1}} B_{i,j})\cap \bigsqcup_{i=1}^{n}(B_i\smallsetminus \bigcup_{j\in I_i} B_{i,j})\\
=\bigsqcup_{i=1}^{n}((B_i\smallsetminus \bigcup_{j\in I_i} B_{i,j}) \cap (B_{n+1}\smallsetminus \bigcup_{j\in I_{n+1}} B_{i,j}))\\
=\bigsqcup_{i=1}^{n}((B_{n+1}\cap B_i)\smallsetminus \bigcup_{j\in I_i} (B_{i,j}\cup (B_i\cap \bigcup_{j\in I_{n+1} } B_{n+1,j})))
.$

The induction hypothesis applies on $[\bigsqcup_{i=1}^{n}((B_{n+1}\cap B_i)\smallsetminus (\bigcup_{j\in I_i} B_{i,j}\cup (B_i\cap \cup_{j\in I_{n+1} } B_{n+1,j})))]$ and on $[\bigsqcup_{i=1}^{n}(B_i\smallsetminus \bigcup_{j\in I_i} B_{i,j})]$. We hence just have to notice that $[B_{n+1}\smallsetminus \bigcup_{j\in I_{n+1}} B_{i,j}]=[B_{n+1}] -[\bigcup_{j\in I_{n+1}} B_{i,j}]$ and use the result already proven for the union of simple sets.

\ref{item3_b} By what preceeds the class of any definable set $A$ is of the form $\sum_{i=0}^{\dim(A)} a_i T^i$ where $T:=[M]$ and where, for every $0\leq i\leq \dim(A), a_i\in \Z$. Let $P$ be a basic set associated to $\sum_{i| a_i> 0} a_i T^i$ and $N$ be the basic set associated to $\sum_{i| a_i\leq 0} -   a_i T^i$. Assume furthermore that both $P$ and $N$ are disjoint from $A$. Then $[N\sqcup A]=[P]$ and both $N$ and $P$ have dimension bounded by $\dim(A)$.
\end{proof}

\begin{lem}\label{suis}
The homomorphism $\pi:(\Z/N\Z)[X]\to K_0(M)$ is surjective.
\end{lem}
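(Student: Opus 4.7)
The plan is to chain together the structural results already established. Given any class $[A]\in K_0(M)$, we want to express it as the image under $\pi$ of some polynomial in $(\Z/N\Z)[X]$.

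First, I would apply Lemma \ref{lem_boolsim} to write any definable set $A$ as a disjoint union $\bigsqcup_{i\in I}(B_i\smallsetminus \bigcup_{j\in I_i} B_{i,j})$ of Boolean combinations of simple sets, with $B_{i,j}\subsetneq B_i$ simple. Then Lemma \ref{triv}\ref{item3_a} tells us that $[A]$ is an integer linear combination of classes of the $[B_i]$ together with classes of intersections $[\bigcap_{j\in K} B_{i,j}]$. By Remark \ref{intersection_simples}, each of these intersections is again simple, so $[A]$ is a $\Z$-linear combination of classes of simple sets.

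Next, I would invoke the lemma from Section \ref{section_simple} stating that every simple set of dimension $d$ is in definable bijection with $M^d$. Consequently, for any simple set $B$, the class $[B]$ in $K_0(M)$ equals $[M]^{\dim(B)}=[M^{\dim(B)}]$, and hence equals $\pi(X^{\dim(B)})$. Combining this with the previous step, $[A]$ is of the form $Q([M])$ for some $Q(X)\in \Z[X]$, that is, $[A]=\tilde\pi(Q(X))$.

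Finally, as noted just before the lemma, the existence of the bijection $f:M\to M\smallsetminus\{c_1,\ldots,c_N\}$ forces $N=0$ in $K_0(M)$, so $\tilde\pi$ factors through $\pi:(\Z/N\Z)[X]\to K_0(M)$, and the image of $Q\bmod N$ under $\pi$ equals $[A]$. This proves surjectivity. There is no serious obstacle here: the statement is essentially a bookkeeping consequence of quantifier elimination (which drives Lemma \ref{lem_boolsim}), the inclusion-exclusion computation of Lemma \ref{triv}\ref{item3_a}, and the definable triviality of simple sets. The genuine content of the theorem — injectivity of $\pi$ — is reserved for the subsequent sections.
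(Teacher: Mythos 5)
Your proposal is correct and follows essentially the same route as the paper: Lemma \ref{lem_boolsim} plus Lemma \ref{triv}\ref{item3_a} to reduce $[A]$ to a $\Z$-linear combination of classes of simple sets, then the definable bijection of a simple set with a Cartesian power of $M$ to identify each such class with a power of $T=[M]$, and finally the factorization of $\tilde\pi$ through $(\Z/N\Z)[X]$. No gaps.
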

\begin{proof}
Lemma \ref{lem_boolsim} and \ref{triv} imply that the class of any definable set in a linear combination over $\Z$ of class of simple sets.
Let $T$ be the class of $M$ in the Grothendieck ring. To conclude we just have to prove that the class of a simple set is an element of $\Z[T]$. This is an immediate consequence of lemma \ref{lem_defbijmd}: a simple set being either a singleton or in definable bijection with a Cartesian power of $M$, its class is of the form $T^k$ with $k\in \N$ (we agree that $0\in\N$ and that $T^0=1$).
\end{proof}

To prove that the Grothendieck ring is $(\Z/N\Z)[X]$ we have to prove that $\pi$ is in fact bijective. Injectivity is the point of the remaining of this article. The following lemma will be used in section 6.

\begin{lem}\label{remplace}
Let $n\in \N$ and let $B$ be a simple subset of $M^{n}$. 
Let $A:=\bigcup_{i=1}^l X_i$ be a finite union of simple sets of $M^{n}$.
Let $F_{1},\ldots, F_{n}$ be finite subsets of $M$ of cardinality a multiple of $N$.
Let us denote $\pi_k: M^n \rightarrow M$ the projection on the $k$-th coordinate.
We assume that for every $1\leq k\leq n$, $F_{k}\cap\pi_k( B)$ and, for every $J\subseteq [|1; l|]$, $ F_{k} \cap\pi_k(\bigcap_{j\in J} X_j\cap B)$ have a cardinality multiple of $N$ (possibly $0$). Then the following holds:
\begin{enumerate}[label=(\alph*)]
\item\label{item4_a} 
$[A\cup \bigcup_{k=1}^{n} B\cap (M^{k-1}\times F_{k}\times M^{n-k})]$ is equal to $[A]$.
\item\label{item4_b} There exist basic sets $P$ and $N$ such that
\begin{itemize}
\item all of whose coefficients are multiples of $N$;
\item their dimension is bounded by $\dim(A\cup B\cap  \bigcup_{k=1}^{n}(M^{k-1}\times F_{k}\times M^{n-k}))$;
\item $ A\cup B\cap \bigcup_{k=1}^{n} (M^{k-1}\times F_{k}\times M^{n-k})\sqcup N$ is in definable bijection with $A\sqcup P$.
\end{itemize}
Furthermore, for any definable set $X$, we can choose $P$ and $N$ to be disjoint from $X$.
\end{enumerate}
\end{lem}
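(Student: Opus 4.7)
My strategy is to prove (a) first by a double inclusion--exclusion, and then derive (b) by combining (a) with Lemma \ref{triv}\ref{item3_b} and a coefficient adjustment.

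For (a), set $S:=\bigcup_{k=1}^n B\cap\pi_k^{-1}(F_k)$. Since $A\cup S = A\sqcup(S\setminus A)$, it suffices to show $[S\setminus A]=0$ in $K_0(M)$. Applying inclusion--exclusion first to $A=\bigcup_i X_i$ and then to $S=\bigcup_k S_k$ gives
\[
[S\setminus A] \;=\; \sum_{I\subseteq\{1,\ldots,l\}}(-1)^{|I|}\,[Y_I\cap S],\qquad Y_I:=B\cap\bigcap_{i\in I}X_i,
\]
with the convention $Y_\emptyset = B$. Each $Y_I$ is simple, so Lemma \ref{lem_defbijmd} gives a definable bijection $Y_I\cong M^{d_I}$ through the determining elements $j_1,\ldots,j_{d_I}$ of the equivalence classes of $Y_I$. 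Under this identification, for $k\in I_r^I$ with $x_k=f^{s_{k,r}}(x_{j_r})$, the slice $Y_I\cap\pi_k^{-1}(F_k)$ pulls back to the condition $y_r\in(f^{s_{k,r}})^{-1}(F_k)$, a subset of $M$ of cardinality $|F_k\cap\pi_k(Y_I)|$, which is a multiple of $N$ by hypothesis. For $k\in C_I$ the same hypothesis forces the slice to be empty as soon as $N\ge 2$, and the case $N=1$ is vacuous since $K_0(M)=0$. Writing $H_r^I:=\bigcup_{k\in I_r^I}(f^{s_{k,r}})^{-1}(F_k)$, this yields
\[
[Y_I\cap S]\;=\;X^{d_I}-\prod_{r=1}^{d_I}\bigl(X-|H_r^I|\bigr).
\]
Expanding the product, taking the alternating sum over $I$, and invoking the hypothesis on each $|F_k\cap\pi_k(Y_J)|$ to handle the mixed intersections contributing to $|H_r^I|$, one obtains a polynomial whose coefficients are all divisible by $N$, thereby proving $[S\setminus A]=0$.

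For (b), set $E:=S\setminus A$; by (a), $[E]=0$ in $K_0(M)$. Lemma \ref{triv}\ref{item3_b} applied to $E$ produces basic sets $N_1$ and $P_1$, of dimension at most $\dim E$ and disjoint from any preassigned definable set, together with a definable bijection $E\sqcup N_1\cong P_1$. The explicit construction in the proof of that lemma expresses $[E]$ as an integer combination of simple-set classes that collapses because $[E]=0$; consequently the polynomials associated to $N_1$ and $P_1$ agree modulo $N$. Choosing a further basic set $Q$, disjoint from everything at hand, whose polynomial $p_Q$ satisfies $p_{N_1}+p_Q\in N\N[X]$ (and hence also $p_{P_1}+p_Q\in N\N[X]$), and setting $N':=N_1\sqcup Q$, $P:=P_1\sqcup Q$, yields basic sets whose coefficients all lie in $N\N$, whose dimensions are bounded by $\dim(A\cup S)$, and which satisfy $E\sqcup N'\cong P$ definably. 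Taking the disjoint union with $A$ on both sides gives the required bijection $(A\cup S)\sqcup N'\cong A\sqcup P$.

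The main obstacle is the combinatorial step in (a): the cardinalities $|H_r^I|$ are \emph{not} individually controlled by the single-coordinate hypotheses on $|F_k\cap\pi_k(Y_I)|$, and so the divisibility by $N$ of the alternating sum $\sum_I(-1)^{|I|}\bigl(X^{d_I}-\prod_r(X-|H_r^I|)\bigr)$ emerges only after a careful tracking of the cross-cancellations between the various $Y_J$'s, each of which contributes information through the hypothesis.
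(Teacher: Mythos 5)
Your reduction of (a) to the identity $[Y_I\cap S]=X^{d_I}-\prod_{r}(X-|H_r^I|)$ is correct, but the gap you flag at the end is not a mere bookkeeping difficulty: the divisibility you need is actually \emph{false} under the stated hypotheses. Take $N=2$, $M=\N$ with $f(x)=x+2$, $n=2$, $B=\{(x,f(x)):x\in M\}$ (a single equivalence class, $d=1$, determining element $x_1$), $A=X_1=\{(0,0)\}$, $F_1=\{0,1\}$, $F_2=\{2,4\}$. Every cardinality in the hypothesis equals $0$ or $2$, yet $H_1=(f^0)^{-1}(F_1)\cup (f^1)^{-1}(F_2)=\{0,1\}\cup\{0,2\}$ has $3$ elements, and indeed $\bigcup_k B\cap \pi_k^{-1}(F_k)=\{(0,2),(1,3),(2,4)\}$ contributes $3$ to the class, which is not $0$ modulo $2$; the conclusion of (a) would force $1=0$ in $K_0(M)$, contradicting the theorem being proved. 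The point is that the hypotheses control each $|F_k\cap\pi_k(Y_J)|$ separately but say nothing about the overlaps of the preimages $(f^{s_{k,r}})^{-1}(F_k)$ for two coordinates $k\neq k'$ lying in the \emph{same} equivalence class of $Y_I$, and $|H_r^I|$ depends exactly on those overlaps. So no cross-cancellation between the various $Y_J$ can rescue the computation; a stronger hypothesis is needed. (In the paper's actual applications, via Lemma \ref{im}, each $F_k$ is a coimage $M\smallsetminus f^{m}(M)$, whose $f$-preimages are nested, so each $H_r^I$ is again of that form and has cardinality a multiple of $N$.) For what it is worth, the paper's own proof, which runs inclusion--exclusion over the individual points $a_k\in F_k$ and groups terms by $(J,K,\nu)$, founders on the same configuration: its ``uniformity'' step tacitly assumes that all classes $[\bigcap_{i\in J}\bigcap_{k\in K}B_i\cap(M^{k-1}\times\{\nu(k)\}\times M^{n-k})]$ coincide as $\nu$ ranges over $H_{J,K}$, which fails as soon as $K$ meets one equivalence class twice (some of these intersections are empty and others are singletons, as in the example above).

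Separately, your derivation of (b) is circular at one step: you infer that the polynomials associated to $N_1$ and $P_1$ agree modulo $N$ from the equality $[E]=0$ in $K_0(M)$. That inference is precisely the injectivity of $\pi:(\Z/N\Z)[X]\to K_0(M)$, which is what the remainder of the paper is devoted to proving and which relies on the present lemma. What a correct proof of (a) must deliver, and what the paper's intended argument aims at, is the stronger \emph{integral} statement that the coefficients of the $\Z$-linear combination of classes of simple sets expressing $[A\cup S]-[A]$ are all divisible by $N$ in $\Z$; part (b) then follows by the same construction as in part (b) of Lemma \ref{triv}, with no appeal to the structure of $K_0(M)$. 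Your padding trick with $Q$ is fine once that integral divisibility is in hand.
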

\begin{proof}
\ref{item4_a} By definition, \\
$[A\cup B\bigcup_{k=1}^{n} \cap (M^{k-1}\times F_{k}\times M^{n-k})]=  [A]+ [B\bigcup_{k=1}^{n} \cap (M^{k-1}\times F_{k}\times M^{n-k})]-[A\cap  B\cap \bigcup_{k=1}^{n} (M^{k-1}\times F_{k}\times M^{n-k})].$ 

Because $A$ is a union of simple sets and since the intersection of simple sets is a simple set,
both $ \bigcup_{k=1}^{n} (B\cap (M^{k-1}\times F_{k}\times M^{n-k}))$ and $ \bigcup_{k=1}^{n} (A\cap B\cap (M^{k-1}\times F_{k}\times M^{n+1-k}))$ are of the form $\bigcup_{i\in I} \bigcup_{k=1}^{n} (B_i\cap (M^{k-1}\times F_{i,1}\times M^{n-k}))$ where for every $i\in I$, $B_i$ is a simple set and for every $J\subseteq I$, the cardinality of $F_{k}\cap \pi_k(\bigcap_{j\in J}B_j)$ is a multiple of $N$.

Thus in order to prove the result, it is enough to show the following lemma:
\begin{lem}
Let $F_{1},\ldots, F_{n}$ be finite subsets of $M$.
Let $m\in \N$ and $B_1,\ldots, B_m$ be simple sets of $M^{n}$ such that for every $1\leq k\leq n$, for every $J\subseteq I$, the cardinality of $F_{k}\cap \pi_k(\bigcap_{j\in J}B_j)$ is a multiple of $N$. 
Then $[\bigcup_{i=1}^m \bigcup_{k=1}^{n} (B_i\cap (M^{k-1}\times F_{k}\times M^{n-k}))]$ is equal to $0$. 
\end{lem}

\begin{proof}

By the previous lemma, $[\bigcup_{i=1}^m \bigcup_{k=1}^n (B_i\cap (M^{k-1}\times F_{k}\times M^{n-k}))]$ is a linear combination over $\Z$ of 
the $[\bigcap_{i\in I} \bigcap_{k\in K} (B_i\cap (M^{k-1}\times  \lbrace a_{k}\rbrace \times M^{n-k}))]$ where $I$ ranges over the subsets of $[|1;m|]$, $K$ over the subsets of $[|1;n|]$ and, for every $k\in K$, $a_{k}\in F_{k}$. (We recall that the intersection of simple sets is a simple set and hence, for every  $i\in I$, every $1\leq k\leq n$, every $a_k\in F_k$, 
$B_i\cap (M^{k-1}\times  \lbrace a_{k}\rbrace \times M^{n-k})$ is simple.)

For every $\nu: K \mapsto \cup_{k\in K} F_k$ such that for every $k\in K, \nu(k)\in F_k$, let $c_{I, K, \nu}$ be the coefficient of $[\bigcap_{i\in I} \bigcap_{k\in K} B_i\cap ((M^{k-1}\times  \lbrace\nu(k)\rbrace \times M^{n-k})]$ in this linear combination. 

\medskip
Let us fix $J$ a subset of $[|1;m|]$ and $K$ a subset of $[|1;n|]$. 

Let us consider the set $H_{J,K}:=\lbrace \nu: K \mapsto \cup_{k\in K} F_k | \forall k, \nu(k)\in F_k\cap \pi_k( \bigcap_{j\in J}B_j) \rbrace$. 
The cardinality of this set is equal to the product of the cardinality of the sets $F_{k}\cap \pi_{k}( \bigcap_{j\in J}B_j)$ when $k$ ranges over $K$. Hence its cardinality is a multiple of $N$.

\medskip
Notice that by uniformity, for any  $\nu, \nu'\in H$, the coefficients $c_{J, K, \nu'}$ and $c_{J, K, \nu}$ are equal. As a result, 
$\sum_{\nu\in H_{J,K}} c_{I, K, \nu}[\bigcap_{i\in J} \bigcap_{k\in K} B_i\cap (M^{k-1}\times  \lbrace\nu(k)\rbrace \times M^{n-k})]$ is a multiple of the carddinality of $H_{J,K}$ and hence is a multiple of $N$ in $K_0(M)$. It is thus equal to $0$ in $K_0(M)$.

\medskip
As a result, since 
$$[\bigcup_{i=1}^m \bigcup_{k=1}^{n} (B_i\cap (M^{k-1}\times F_{k}\times M^{n-k}))]=\sum_{J \subset [|1; m|], K\subset [|1;n|]} \sum_{\nu\in H_{J,K}} c_{J, K, \nu}[\bigcap_{i\in J} \bigcap_{k\in K} B_i\cap (M^{k-1}\times  \lbrace\nu(k)\rbrace \times M^{n-k})],$$ it is equal to $0$ in $K_0(M)$.

\ref{item4_b} This can be proved exactly like \ref{item2_b} in lemma \ref{triv}.
\end{proof}

\end{proof}




\section{Normal functions}

\begin{de}\label{de_normal}
We say that a function $g$ defined on a definable subset $A\subseteq M^n$ and ranging into $M^m$ is \textbf{normal} if its graph $\lbrace (x_1,\ldots, x_n, y_1,\ldots, y_m)| g(x_1,\ldots, x_n)=(y_1,\ldots, y_m), (x_1,\ldots, x_n)\in A\rbrace$ can be defined by 
a formula of the form:
$$(x\in A)\wedge (\wedge_{i=1}^m f^{k_i}(y_i)=x_{\sigma(i)})$$
where for every $1\leq i\leq m, k_i\in\Z$ and $\sigma$ is a function from $[\![ 1; m]\!] $ to $[\![ 1; n]\!] $.

We call a formula of this form a \textbf{normal formula}. 
\end{de}

\begin{lem}\label{2}
Let $B\subseteq M^n$ be a simple set, $C\subsetneq B$ a definable subset
and $g$ a normal function defined on $B \smallsetminus C$ by a formula as in lemma \ref{de_normal}. Then $g$ is injective if and only if the image of $\sigma$ contains a representative of every equivalence class of $B$.
\end{lem}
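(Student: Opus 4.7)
The lemma asserts an equivalence, and the plan is to prove each direction separately by unwinding the meaning of ``equivalence class of $B$'' against the shape of the normal formula for $g$.

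For the ``if'' direction, suppose that the image of $\sigma$ meets every equivalence class of $B$. Take $x, x' \in B \smallsetminus C$ with $g(x) = g(x') =: y$. The normal formula yields $f^{k_i}(y_i) = x_{\sigma(i)} = x'_{\sigma(i)}$ for every $i$, so $x_{\sigma(i)} = x'_{\sigma(i)}$. For any index $j \in [\![1;n]\!]$, either $j$ lies in the set of constants of $B$ --- in which case $x_j = x'_j$ equals the associated constant --- or $j$ belongs to some equivalence class $I$ of $B$, and the hypothesis provides $i^*$ with $\sigma(i^*) \in I$. The definition of the equivalence relation on $B$ then supplies a fixed integer $\ell$ so that $x_j = f^\ell(x_{\sigma(i^*)})$ or $x_{\sigma(i^*)} = f^\ell(x_j)$ holds identically on $B$; applying this to both $x$ and $x'$ forces $x_j = x'_j$. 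Hence $x = x'$.

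For the ``only if'' direction, I argue the contrapositive: assuming $\sigma$ misses some equivalence class $I_{j_0}$, I would produce $x \neq x'$ in $B \smallsetminus C$ with $g(x) = g(x')$. Fix a determining element $j^* \in I_{j_0}$. For any $x \in B$ and any $t \in M$ there is a unique $x^{(t)} \in B$ agreeing with $x$ outside $I_{j_0}$ and satisfying $x^{(t)}_{j^*} = t$ (the remaining $I_{j_0}$-coordinates being forced by the determining $f$-power relations). The family $F_x := \{ x^{(t)} : t \in M\} \subseteq B$ is in definable bijection with $M$, and since $\sigma$ misses $I_{j_0}$, the quantities $x^{(t)}_{\sigma(i)}$ are independent of $t$, so $g$ is constant on $F_x$.

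The main obstacle is to guarantee that some fiber $F_x$ meets $B \smallsetminus C$ in at least two points, and I would treat this by a dimension argument. The projection $\pi \colon B \to B'$ that deletes the $I_{j_0}$-coordinates is a definable surjection whose fibers are precisely the $F_x$; the target $B'$ is a simple set with constants inherited from $B$ and equivalence classes $I_1, \dots, \widehat{I_{j_0}}, \dots, I_d$, hence $\dim B' = \dim B - 1$ by Lemma \ref{lem_defbijmd}. Since $\sigma$ avoids $I_{j_0}$, the function $g$ factors as $g = h \circ \pi$. If $g|_{B \smallsetminus C}$ were injective, then $\pi|_{B \smallsetminus C}$ would be injective as well, giving a definable injection $B \smallsetminus C \hookrightarrow B'$; Proposition \ref{dim_inclu} and Corollary \ref{cor_dimsem} would then force $\dim(B \smallsetminus C) \leq \dim B - 1$. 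In the envisaged use of the lemma --- where $C$ is a finite union of proper simple subsets of $B$, as furnished by Lemma \ref{lem_boolsim} --- we have $\dim C < \dim B$, so $\dim(B \smallsetminus C) = \dim B$, producing the desired contradiction.
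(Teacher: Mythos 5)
Your proof is correct and follows essentially the same route as the paper's: the same coordinate-by-coordinate unwinding for the ``if'' direction, and for the ``only if'' direction the same key observation that $g$ is constant on the one-dimensional fibers obtained by varying the $I_{j_0}$-coordinates, combined with a dimension argument to find a fiber meeting $B\smallsetminus C$ in two points (you route this through Proposition \ref{dim_inclu} and Corollary \ref{cor_dimsem}, the paper through the closure of $B\smallsetminus C$ being a smaller simple set, but the content is the same). The caveat you flag is genuine --- the ``only if'' direction needs $\dim(B\smallsetminus C)=\dim B$, equivalently $\overline{B\smallsetminus C}=B$, which fails for arbitrary definable $C\subsetneq B$ (e.g.\ $B=M^2$ and $C=M^2\smallsetminus\lbrace (x,f(x))\mid x\in M\rbrace$) --- and the paper's own proof silently assumes it by asserting it follows from irreducibility of $B$, so your version, which states explicitly that the hypothesis holds in the lemma's intended applications where $C$ is a union of proper simple subsets, is if anything the more careful one.
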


\begin{proof}
Let $I_1,\ldots, I_d$ be the equivalence classes of $B$. 
Let us assume that $\sigma$ does not satisfy the property of the lemma, and let $1\leq t \leq d$ be such that 
no representative of $I_t$ is in the image of $\sigma$.
Then any two elements of $B$, $x$ and $y$ such that for every $i\notin I_t$, $x_i=y_i$ have the same image by $g$. 
The point is then to see that $B \smallsetminus C$ contains two distinct such elements.
If it is not so, then $B\smallsetminus C$ is included in a simple set defined by the conjunction of the formula defining $B$ and a formula fixing each coordinate $x_i$ with $i\in I_t$ to be equal to a constant. This set has thus a dimension strictly less than $B$ which is impossible since $\overline{B\smallsetminus C}$ is equal to $B$ (as $B$ is simple, hence irreducible as a closed set).

Conversely, let us assume that $\sigma$ satisfies this property. If two elements $x, y$ have the same image under $g$ then for every $i\in [\![ 1;n]\!] $, $x_{\sigma(i)}=y_{\sigma(i)}$.
But, since the image of $\sigma$ contains a representative of every equivalence class of $A$, this means that for every $1\leq i\leq n$, $x_i=y_i$.
\end{proof}

\begin{lem}\label{new} 
Let $B$ be a simple set and $C\subsetneq B$ a definable subset.
Let $g$ and $h$ be two normal functions defined on $B$. Let us assume that they coincide on $B\smallsetminus C$. Then they also coincide on $B$.
\end{lem}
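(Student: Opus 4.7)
My plan is to show that the coincidence locus $E := \{x \in B : g(x) = h(x)\}$ is itself a simple subset of $B$, and then to invoke irreducibility of $B$ in the noetherian topology $\Top_n$ to force $E = B$.

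First, I write the normal formulas: $g$'s graph is defined by $\bigwedge_{i=1}^m f^{k_i}(y_i) = x_{\sigma(i)}$ and $h$'s by $\bigwedge_{i=1}^m f^{l_i}(y_i) = x_{\tau(i)}$. For each coordinate $i$, I unpack the condition $g(x)_i = h(x)_i$. For instance, when $k_i \geq l_i$, combining the two defining equations and using that $f$ is a bijection yields the equivalence $g(x)_i = h(x)_i \iff x_{\sigma(i)} = f^{k_i - l_i}(x_{\tau(i)})$; the opposite case is symmetric. Either way, the $i$-th coincidence set $E_i := \{x \in M^n : g(x)_i = h(x)_i\}$ is cut out by a single equation between terms of the form allowed in the definition of a simple set.

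Taking the intersection, $E = B \cap \bigcap_{i=1}^m E_i$ is a finite intersection of simple sets, hence simple by Remark \ref{intersection_simples}; in particular it is closed in $\Top_n$. Now suppose for contradiction that $E \subsetneq B$. Then $\dim E < \dim B$ by Remark \ref{dimaj}(a). But $B \smallsetminus C \subseteq E$ and $E$ is closed, so $\overline{B \smallsetminus C} \subseteq E$; and as argued in the proof of Lemma \ref{2}, the irreducibility of $B$ together with $C \subsetneq B$ gives $\overline{B \smallsetminus C} = B$, whence $E \supseteq B$, a contradiction. Hence $E = B$ and $g = h$ on $B$.

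The main obstacle I anticipate is the density step $\overline{B \smallsetminus C} = B$, which is the same delicate point exploited in Lemma \ref{2}: it hinges on $C$ not filling out the irreducible space $B$, and I would handle it by transporting the argument from that earlier lemma. The remaining steps---rewriting the coordinate-wise coincidence set as a simple equation, and closure of simple sets under finite intersection---are routine manipulations of normal formulas using the injectivity of $f$ and its lack of cycles.
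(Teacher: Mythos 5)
Your argument is correct and is in substance the paper's own proof, just packaged more cleanly: the paper likewise extends $g$ and $h$ by their defining term equations and then argues that the coordinate identities forced on $B\smallsetminus C$ propagate to all of $B$, which is exactly your observation that the coincidence locus is a simple (hence closed) set containing the dense subset $B\smallsetminus C$ of the irreducible space $B$. The one caveat --- shared with the paper's proofs of this lemma and of Lemma \ref{2} --- is that the density step $\overline{B\smallsetminus C}=B$ is \emph{not} automatic for an arbitrary definable $C\subsetneq B$ (take $B=M^2$, $C$ the complement of the diagonal, $g$ and $h$ the two coordinate projections); it needs $\overline{C}\subsetneq B$, which does hold in every application of the lemma because there $C$ is a finite union of proper simple subsets of $B$.
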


\begin{proof}
Let us consider a formula defining $g$:
$$(x\in B\smallsetminus C)\wedge (\wedge_{i=1}^m y_i=f^{k_i}(x_{\sigma(i)}))$$
where for every $1\leq i\leq m, k_i\in\Z$ and $\sigma$ is a function from $[\![ 1; m]\!] $ to $[\![ 1; n]\!] $.

Let $\tilde{g}$ be the function defined by 
$$(x\in B)\wedge (\wedge_{i=1}^m y_i=f^{k_i}(x_{\sigma(i)})).$$

Analogously, $h$ is defined by a formula:
$$(x\in B\smallsetminus C)\wedge (\wedge_{i=1}^m y_i= f^{k'_i}(x_{\sigma'(i)}))$$
where for every $1\leq i\leq m, k'_i\in\Z$ and $\sigma'$ is a function from $[\![ 1; m]\!] $ to $[\![ 1; n]\!] $.

Let $\tilde{h}$ be the function defined by 
$$(x\in B)\wedge (\wedge_{i=1}^m y_i=f^{k'_i}(x_{\sigma'(i)})).$$

Let us check that $\tilde{h}=\tilde{g}$.

Let $1\leq i\leq m$.
For every $x\in B\smallsetminus C$, because $y:=g(x)=h(x)$, we have that
$y_i=f^{k_i}(x_{\sigma(i)})= f^{k'_i}(x_{\sigma'(i)})$. 
To conclude, we just have to notice that for every $x\in B$, every $1\leq i\leq m$,  there exists $x'\in B\setminus C$ such that $x_{\sigma(i)} =x'_{\sigma'(i)}$.
%
\end{proof}

\begin{lem}
 Let $B$ be a simple set and $C\subsetneq B$ a definable subset. Let $g$ be a normal injection $g$ defined on $B\smallsetminus C$ by the normal formula $$(x\in B\smallsetminus C)\wedge (\wedge_{i=1}^m f^{k_i}(y_i)=x_{\sigma(i)}).$$
Then $g$ 
can be uniquely extended on $B$ to a normal injection. This injection is definable by $$(x\in B)\wedge (\wedge_{i=1}^m f^{k_i}(y_i)=x_{\sigma(i)}).$$
\end{lem}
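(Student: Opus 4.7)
The plan is to show that the formula $\phi(x,y):=(x\in B)\wedge (\wedge_{i=1}^m f^{k_i}(y_i)=x_{\sigma(i)})$ defines a normal injection $\tilde g$ on $B$ (which automatically extends $g$, since the two defining formulas agree on $B\smallsetminus C$), and then to invoke Lemma \ref{new} for uniqueness of the extension.

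First, I would show that $\phi$ defines a function on $B$. For each fixed $x\in B$, uniqueness of $y$ is immediate from the injectivity of each iterate $f^{k_i}$, which follows from the fact that $f$ is injective in any model of $\mathcal{T}_N$. For existence: on $B\smallsetminus C$ one may simply take $y=g(x)$; for $x\in C$ one must ensure that $x_{\sigma(i)}\in\operatorname{Im}(f^{k_i})$ for every $i$ with $k_i>0$. The set of $x\in B$ where this fails is, after expressing each $x_{\sigma(i)}$ in terms of the determining element of the equivalence class of $\sigma(i)$ in $B$, a union of simple proper subsets of $B$ of strictly smaller dimension; this ``bad set'' is necessarily contained in $C$ since $\phi$ is satisfiable on $B\smallsetminus C$, and can be shown --- by exploiting the equivalence-class structure of $B$ together with the injectivity hypothesis on $g$ --- to be empty.

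Next, $\tilde g$ is normal by construction of its defining formula. For injectivity, Lemma \ref{2} applied to $g$ on $B\smallsetminus C$ forces the image of $\sigma$ to contain a representative of every equivalence class of $B$; applying Lemma \ref{2} once more, now to $\tilde g$ on $B$ with empty exception set, yields injectivity of $\tilde g$. Finally, uniqueness of the extension is an immediate consequence of Lemma \ref{new}: any other normal function $h$ on $B$ coinciding with $g$ on $B\smallsetminus C$ would agree with $\tilde g$ on $B\smallsetminus C$, hence with $\tilde g$ on all of $B$.

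The main obstacle is the existence step in the first paragraph: while uniqueness of $y$ follows at once from injectivity, the equations $f^{k_i}(y_i)=x_{\sigma(i)}$ with $k_i>0$ only admit a solution when $x_{\sigma(i)}$ lies in $\operatorname{Im}(f^{k_i})$, so one must rule out the existence of ``bad'' $x\in C$ where some coordinate $x_{\sigma(i)}$ fails to lie in this image. This step is where the injectivity hypothesis on $g$ is used essentially, via the fact (from Lemma \ref{2}) that $\sigma$ already meets every equivalence class of $B$ on $B\smallsetminus C$.
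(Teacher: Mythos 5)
Your skeleton coincides with the paper's own proof, which consists precisely of the two citations you make: uniqueness of the extension from Lemma \ref{new}, and preservation of injectivity from Lemma \ref{2}. Where you go beyond the paper is in isolating the totality question --- whether the formula $(x\in B)\wedge(\wedge_{i=1}^m f^{k_i}(y_i)=x_{\sigma(i)})$ actually admits a solution $y$ for \emph{every} $x\in B$ --- and you are right that this is the only non-immediate point, since uniqueness of $y$ follows from injectivity of the iterates of $f$.

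The gap is in your resolution of that point. You assert that the ``bad set'' of $x\in B$ with some $x_{\sigma(i)}\notin\Im(f^{k_i})$ can be shown to be empty using the equivalence classes of $B$ and the injectivity of $g$, but no argument is given, and the claim is false in general. Take $B=M$, $C=\lbrace c_1,\ldots,c_N\rbrace$ and let $g$ be defined on $B\smallsetminus C=\Im(f)$ by the normal formula $f(y)=x$, i.e.\ $g=f^{-1}$; this is a normal injection, and $\sigma$ trivially meets the unique equivalence class of $B$. The formula $(x\in M)\wedge(f(y)=x)$ has no solution at $x=c_1$, so it does not define a function on all of $B$; moreover no other normal formula can extend $g$, since agreement with $g$ on the infinite set $B\smallsetminus C$ together with the absence of cycles of $f$ forces the exponent $k=1$. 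So the bad set equals $C$ and is nonempty: injectivity of $g$ (via Lemma \ref{2}) constrains only the map $\sigma$, not the signs of the $k_i$, and cannot rule this out. The statement therefore needs either an extra hypothesis guaranteeing totality (e.g.\ that each equation can be written $y_i=f^{l_i}(x_{\sigma(i)})$ with $l_i\ge 0$) or a weakening to a partial extension; the paper's two-line proof passes over this silently, and your attempt to fill the hole does not succeed as written.
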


\begin{proof}
The fact that it can be uniquely extended is a direct consequence of lemma \ref{new}.
The fact that the injectivity is preserved is an immediate consequence of \ref{2}.
\end{proof}

\begin{de}
Let $B$ be a simple set and $C\subsetneq B$ a definable subset.
Let $g$ be a normal injection defined on $B\smallsetminus C$.
We call \textbf{normal extension} of $g$ the normal injection that extends $g$ to $B$.
\end{de}


\begin{lem}\label{im}
Let $m, n\in \N$.
Let $B\subseteq M^n$ be a simple set and $g$ a normal injection defined on $B$ ranging into $M^m$. 
Then there exist $B'$ a simple set of dimension $\dim(B)$ and $F_1,\ldots, F_m$ (possibly empty) finite subsets of $M$ of cardinality a multiple of $N$ such that $g(B)=B'\smallsetminus [B'\cap \bigcup_{i=1}^m  (M^{i-1}\times F_i \times M^{m-i})]$. Furthermore, if we denote $\pi_i$ the projection onto the $i$-th coordinate, then for every $1\leq i\leq m$, $F_i\subseteq \pi_i(\overline{g(B)}\smallsetminus g(B))$ and in particular $F_i\subseteq\pi_i(B')$;

\medskip
In addition, the set $B'$ and $F_1,\ldots, F_m$ satisfy that:
\begin{itemize}
\item For any $l\in \N_{>0}$, any $C_1,\ldots, C_l$, simple sets included in $B$,
\[g(B\smallsetminus \bigcup_{j=1}^l C_j)=B'\smallsetminus [B'\cap \bigcup_{i=1}^m (M^{i-1}\times F_i \times M^{m-i})\cup \bigcup_{j=1}^l C'_j]\] where

\begin{itemize}
\item for every $1\leq j\leq l$, $C'_j$ is a simple set included in $B'$;
\item for every $J\subseteq \lbrace 1, \ldots, l\rbrace$, $\bigcup_{i\in J} C_i$ is in definable bijection with $\bigcup_{i\in J} C'_i$, in particular for every $1\leq j\leq l$, $\dim(C'_j)=\dim(C_j)$; \item for every $I\subseteq \lbrace 1,\ldots, m\rbrace$, for every $J\subseteq \lbrace 1,\ldots , l\rbrace$, there exists $N_{I,J}\in \N$ a multiple of $N$ such that $\bigcap_{j\in J} C'_j \cap \bigcap_{i\in I}(M^{i-1}\times F_i \times M^{m-i})$ is the disjoint union of $N_{I,J}$ disjoint simple sets in definable bijection with each other.
\end{itemize}
\end{itemize}
\end{lem}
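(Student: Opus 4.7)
My approach is to construct $B'$ by translating the defining formula of $B$ through the relations $f^{k_i}(y_i) = x_{\sigma(i)}$, with the $F_i$'s encoding the obstruction to inverting these relations. Let $C \sqcup \bigsqcup_{t=1}^d I_t$ be the partition of $[\![ 1;n]\!]$ associated to $B$; by Lemma \ref{2}, injectivity of $g$ forces $\sigma$ to hit every $I_t$, so $J_t := \sigma^{-1}(I_t)$ is nonempty. Define $B'$ as the simple subset of $M^m$ with equivalence classes $J_1, \ldots, J_d$ (the orbit relation within $J_t$ being derived by eliminating $x_{j_t}$ from the pairs of equations $f^{k_i}(y_i) = f^{p_i}(x_{j_t})$ coming from $B$) and with constant coordinates $\sigma^{-1}(C)$, where for $i \in \sigma^{-1}(C)$ with $x_{\sigma(i)} = c$ in $B$ the coordinate $y_i$ is fixed by $f^{k_i}(y_i) = c$ (solvable in $M$ since $g$ is defined on $B$). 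Then $\dim B' = d = \dim B$. Pick determining elements $j^*_t \in J_t$ so that $y_i = f^{r_i}(y_{j^*_t})$ with $r_i \geq 0$ within $B'$, and set $F_i := \{f^j(c_l) : r_i \leq j < |k_i|, 1 \leq l \leq N\}$ when $k_i < 0$ and $|k_i| > r_i$, otherwise $F_i := \emptyset$. Then $|F_i| = (|k_i| - r_i) N$ or $0$, a multiple of $N$, and $F_i \subseteq f^{r_i}(M) = \pi_i(B')$.

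To prove $g(B) = B' \setminus [B' \cap \bigcup_i (M^{i-1} \times F_i \times M^{m-i})]$: for the $\subseteq$ direction, for $k_i < 0$ the value $g(x)_i = f^{|k_i|}(x_{\sigma(i)})$ lies in $f^{|k_i|}(M)$, disjoint from $F_i$. For $\supseteq$, given $y \in B'$ with $y_i \notin F_i$ for every $i$, set $x_{\sigma(i)} := f^{k_i}(y_i)$; the relations of $B'$ ensure consistency, and the conditions $y_i \notin F_i$ together with $y_i \in \pi_i(B') = f^{r_i}(M)$ give $y_i \in f^{|k_i|}(M)$ for $k_i < 0$, so each $x_{\sigma(i)}$ is a well-defined element of $M$; extending $x$ to the remaining coordinates using $B$'s equivalence class structure gives $x \in B$ with $g(x) = y$. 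The removed set is a finite union of simple sets of dimension strictly less than $\dim B'$, hence nowhere dense in $B'$, so $\overline{g(B)} = B'$ and thus $F_i \subseteq \pi_i(\overline{g(B)} \setminus g(B))$.

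For each $C_j \subseteq B$, define $C'_j$ by applying the same translation to $C_j$'s defining formula; then $C'_j$ is a simple subset of $B'$, and $\dim C'_j = \dim C_j$ (since $\sigma$ also hits every equivalence class of $C_j$). The map $g$ restricts to a definable bijection $\bigcup_{j \in J} C_j \to \bigcup_{j \in J} C'_j \cap g(B) = \bigcup_{j \in J} C'_j \setminus \bigl(\bigcup_{j \in J} C'_j \cap \bigcup_i \{y_i \in F_i\}\bigr)$, and the full definable bijection $\bigcup_{j \in J} C_j \to \bigcup_{j \in J} C'_j$ is obtained by absorbing the missing part via the multiplicity-$N$ structure below, together with the definable shift $f: M \to M \setminus \{c_1, \ldots, c_N\}$.

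The multiplicity-$N$ claim is the main obstacle. Decompose $\bigcap_{j \in J} C'_j \cap \bigcap_{i \in I} (M^{i-1} \times F_i \times M^{m-i})$ as the disjoint union of simple pieces $D_\nu := \bigcap_j C'_j \cap \{y_i = \nu(i) : i \in I\}$ indexed by consistent assignments $\nu : I \to M$ with $\nu(i) \in F_i$. Two key observations: first, every constant coordinate of $\bigcap_j C'_j$ takes a value of the form $f^s(c^*)$ for some $c^* \in M$ and $s \geq |k_i|$ (traced back through the translation from constants of the $C_j$'s, with orbit shifts arising when combining different $C'_j$'s), hence lies outside $F_i \subseteq \{f^j(c_l) : j < |k_i|\}$; thus if $I$ meets any constant coordinate of $\bigcap_j C'_j$, every $D_\nu$ is empty and $N_{I,J} = 0$. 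Second, when $I$ meets only free equivalence classes of $\bigcap_j C'_j$, the consistency conditions on $\nu(i) = f^{j_i}(c_{l_i}) \in F_i$ depend only on the orbit positions $j_i$ and not on the starting constants $c_{l_i}$, so valid $\nu$'s factor as (valid orbit-position tuples) times (a choice of $c_l \in \{c_1, \ldots, c_N\}$ for each equivalence class of $\bigcap_j C'_j$ met by $I$), giving $N_{I,J}$ a multiple of $N$. All $D_\nu$ are simple of the same dimension (fixing $y_i = \nu(i)$ collapses the same equivalence classes independently of $\nu$), hence pairwise in definable bijection by Lemma \ref{lem_defbijmd}. The main difficulty is tracking the structure of constants in $\bigcap_j C'_j$ and verifying the independence of consistency conditions from the starting constants $c_l$.
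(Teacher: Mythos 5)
Your overall strategy follows the paper's (transport the defining relations of $B$ through the normal formula to build $B'$ and the $F_i$, then analyse the pieces $C'\cap\{y_i\in F_i\}$), but there is a genuine gap in the construction of the sets $F_i$: you compute the defect of surjectivity of the coordinate map as if $x_{\sigma(i)}$ ranged over all of $M$, whereas inside $B$ the coordinate $x_{\sigma(i)}$ ranges only over $f^{p_i}(M)$, where $p_i\ge 0$ is the depth of $\sigma(i)$ in its equivalence class (your own setup records these depths in the relations $f^{k_i}(y_i)=f^{p_i}(x_{j_t})$, but your formula for $F_i$ uses only $k_i$ and $r_i$ and forgets $p_i$). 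Concretely, take $n=2$, $m=1$, $B=\{(a,f(a)):a\in M\}$ and $g$ given by the normal formula $(x\in B)\wedge(f^{0}(y_1)=x_2)$, so $k_1=0$, $\sigma(1)=2$, and $g$ is injective by Lemma \ref{2}. Your recipe gives $B'=M$, $r_1=0$ and $F_1=\emptyset$ (since $k_1=0$), hence predicts $g(B)=M$; but $g(B)=f(M)=M\smallsetminus\{c_1,\ldots,c_N\}$. The same issue is where your $\supseteq$ argument breaks: ``extending $x$ to the remaining coordinates using $B$'s equivalence class structure'' may require applying $f^{-1}$ to a point outside $f(M)$ (here, from $x_2=c_1$ one cannot recover $x_1$). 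The repair is what the paper does: first parametrize $B$ by $M^{\dim B}$ via determining elements, so that $g\circ h$ takes the form $y_i=f^{\kappa_i}(x_j)$ with $\kappa_i=p_i-k_i\ge 0$ and $x_j$ genuinely free; the correct $F_i$ is then $M\smallsetminus f^{\kappa_i}(M)$ (of cardinality $\kappa_i N$), not $\{f^j(c_l): r_i\le j<|k_i|\}$.

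A secondary remark: your treatment of the multiplicity-$N$ claim is in spirit more careful than the paper's, which asserts $F_i\subseteq\pi_i(\bigcap_j C'_j)$ for all $i$ (false in general, e.g.\ for $C=\{(a,f^5(a))\}$ under $g=(f,f)$ on $M^2$) and counts $\prod_i|F_i|$ nonempty pieces; your split into the case where $I$ meets a constant coordinate of $\bigcap_j C'_j$ (to be shown to force emptiness) versus the free case (where the count factors through a free choice among $c_1,\ldots,c_N$) is the right idea. But, as you say yourself, the bookkeeping of the constants created by intersecting the $C'_j$ is left unverified, so this part is also not complete as written.
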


\begin{proof}
We first prove that $g(B)=B'\smallsetminus [B' \cap (\bigcup_{i=1}^m B'\cap (M^{i-1}\times F_i \times M^{m-i}))]$ where the sets $F_i$ satisfy the properties of the lemma. Let $d:=\dim(B)$, $I_1, \ldots, I_d$ be the equivalence classes of $B$, and for each $1 \leq i \leq d$, $k_i$ a determining element of $I_i$. Let $h$ be the bijection between $M^d$ and $B$ such that $h(x_1,\ldots,x_d)$ is the element $(y_1,\ldots,y_n)$ of $B$ that satisfies for every $1\leq i \leq d,$ $y_{k_i}=x_i$.
The function $g\circ h$ is a bijection from $M^d$ to $g(B)$ and is definable by a formula of the form
$$(y=g\circ h(x)) \equiv [\wedge_{i\in S_1} y_i=f^{k_i}(x_{\alpha(i)})]\wedge [\wedge_{i\in S_2} y_i=c_i]$$ where 
\begin{itemize}
\item $S_1\sqcup S_2$ is a partition of $[|1;m|]$;
\item $\alpha$ is a surjection from $S_1$ onto $[|1;d|]$;
\item for every $i\in S_2$, $c_i$ is a constant.
\end{itemize}

For every $1\leq j\leq d$, let $J_j$ be the set of integers $i\in [1;m]$ such that a formula of the form "$y_i=f^{k}(x_{j})$" is implied by the definition of $h$.
For every $i\in J_j$, we call "associated degree" and denote $k_i$ the integer such that $y_i=f^{k_i}(x_{j})$ (such an integer is unique since $f$ has no cycles). 

For every $1\leq j\leq d$, let $M_j$ be the subset of $J_j$ consisting of elements with minimal
associated degree, say $m_j$.
For every $1\leq j\leq d$, we choose an element $i_{j}$ of $M_j$.

The elements $(y_1,\ldots,y_m)$ of $g(B)$ are then defined by the following conditions:
\begin{itemize}
\item for every $1\leq j\leq d$ and every $i \in I_j$, $y_i=f^{k_i-m_j}(y_{i_{j}})$;
\item for every $1\leq j\leq d$, $y_{i_{j}}\in f^{m_j}(M)$;
\item for every $i\in S_2$, $y_i=c_i$.
\end{itemize}

Let $g'$ be the injection ranging into $M^m$ defined on $B$ by $$(y=g'(x)) \equiv [x\in B]\wedge [\wedge_{j=1}^d \wedge_{i\in J_j} y_i=f^{k_i-m_j}(x_{j})]\wedge [\wedge_{i\in S_2} y_i=c_i].$$ 

Then the image by $g'$ of $B$ is the simple set $B'$ defined by $(y_1,\ldots, y_m)\in B'$ if and only if:
\begin{itemize}
\item for every $1\leq j\leq d$, and every $i \in I_j$, $y_i=f^{k_i-m_j}(y_{i_{j}})$;
\item for every $i\in S_2$, $y_i=c_i$.
\end{itemize}

Let us notice that for every $1\leq j\leq d$, every $i \in J_j$ there exists a finite set
of cardinality a multiple of $N$, that we denote $F_{i}$, such that $f^{k_i}(M)=M\smallsetminus F_i$.
For every $i\in S_2$, we put $F_i:=\emptyset$.
Then
$g(B)=B'\smallsetminus \bigcup_{i=1}^m  \tilde{B}_i$ where for every $i\in [|1;m|]$, $ \tilde{B}_i:=B'\cap (M^{i-1}\times F_{i}\times M^{m-i})$.
It is immediate to check that $F_i\subseteq\pi_i(\overline{g(B)}\smallsetminus g(B))$.

\medskip
\noindent
We now prove that for any $l\in \N_{>0}$ any simple sets $C_1,\ldots, C_l$ included in $B$.
Since, for every $1\leq i\leq m$, the sets $F_{i}$ are only dependent on the integers $k_i$ that appear in the definition of $g$, for any simple set $C\subseteq B$, $g(C)=C'\smallsetminus \bigcup_{i=1}^m  \tilde{C}_i$ where 
\begin{itemize}
\item $C'$ is a simple set included in $B'$ whose dimension is equal to that of $C$;
\item for every $i \in [|1;m|]$, $\tilde{C}_i:=C'\cap (M^{i-1}\times F_{i}\times M^{m-i})\subseteq B'\cap (M^{i-1}\times F_i\times M^{m-i})$.
\end{itemize}

For every $1\leq j\leq l$, let $C'_j=\overline{g(C_j)}$.
Then for every $J\subseteq \lbrace 1, \ldots, l\rbrace$, the set $\bigcup_{i\in J} C_i$ is in definable bijection with $g'(\bigcup_{i\in J} C_i)=\bigcup_{i\in J} C'_i$.
Hence, if $l\in \N_{>0}$ and $C_1,\ldots, C_l$ are simple sets included in $B$, then 
$$g(B\smallsetminus \bigcup_{j=1}^l C_j)=B'\smallsetminus (\bigcup_{i=1}^m (M^{i-1}\times F_{k_i}\times M^{m-i})\cup \bigcup_{j=1}^l C'_j)$$
where 
\begin{itemize}
\item for every $1\leq j\leq l$, $C'_j$ is a simple set included in $B'$;
\item for every $J\subseteq \lbrace 1, \ldots, l\rbrace$, $\bigcup_{i\in J} C_i$ is in definable bijection with $\bigcup_{i\in J} C'_i$, in particular for every $1\leq j\leq l$, $\dim(C'_j)=\dim(C_j)$ where $C'_j=\overline{g(C_j)}$.
\end{itemize}

\medskip
Let us now prove the last point.
Let us notice that for any simple set $C$, for every $1\leq i\leq m$, 
$F_i \subseteq \pi_i(\overline{g(C)})$.

In particular, for every $1\leq i\leq m$, for every $J\subseteq \lbrace 1,\ldots ,m\rbrace$, $F_i\subseteq \pi_i(\overline{g(\bigcap_{j\in J} C_j)})=\pi_i(\bigcap_{j\in J} C'_j)$. 

For simplicity, let us denote $C:=\bigcap_{j\in J} C'_j$ and $|F_i|$ the cardinalit of $F_i$.
Since for every $1\leq i\leq m$, $F_i\subseteq\pi_i(C)$, 
$\bigcap_{i\in I} C'\cap(M^{i-1}\times F_i\times M^{m-i})$ is the disjoint union of $\Pi_{i\in I| F_i\neq \emptyset} |F_i|$ simple sets in definable bijection with each other. More precisely, 
$\bigcap_{i\in I} C'\cap(M^{i-1}\times F_i\times M^{m-i})= \bigsqcup_{(x_i)_{i\in I}\in \Pi_{i\in I} F_i} C'\cap(M^{i-1}\times \lbrace x_i\rbrace \times M^{m-i})$.



\end{proof}

\section{Decomposition of definable sets adapted to a definable injection}

\begin{lem}
Let $A$ be a definable set and $g$ a definable injection defined on $A$.
Then there exists a finite Boolean combination of simple sets, $\bigsqcup_{i\in I} (B_i\smallsetminus \bigcup_{j\in I_i} B_{i,j})$ such that:
\begin{itemize}
\item for every $i, j$, $B_{i,j} \subseteq B_i$ are simple sets;
\item $\bigsqcup_{i\in I} (B_i\smallsetminus \bigcup_{j\in I_i} B_{i,j})$ is a disjoint union of $A$;
\item $g$ restricted to $B_i\smallsetminus \bigcup_{j\in I_i}  B_{i,j}$ is a normal function.
\end{itemize}
\end{lem}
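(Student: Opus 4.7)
The plan is to work with the graph $\Gamma:=\{(x,g(x)) : x\in A\}\subseteq M^{n+m}$ rather than with $A$ directly (here $A\subseteq M^n$ and $g:A\to M^m$). First, I would apply Lemma \ref{lem_boolsim} to $\Gamma$ to get a disjoint decomposition $\Gamma=\bigsqcup_{i\in I}(G_i\smallsetminus\bigcup_{j\in J_i}G_{i,j})$ with $G_{i,j}\subsetneq G_i$ simple subsets of $M^{n+m}$. Let $\pi:M^{n+m}\to M^n$ be the projection onto the first $n$ coordinates. Since $g$ is a function, $\pi|_{\Gamma}$ is injective; setting $P_i:=G_i\smallsetminus\bigcup_j G_{i,j}$ and $A_i:=\pi(P_i)$, one obtains a definable bijection $\pi:P_i\to A_i$, and the $A_i$'s partition $A$ disjointly.

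The crux is to show that every equivalence class of the simple set $G_i$ contains at least one of the first $n$ coordinates. Let $d$ be the total number of equivalence classes of $G_i$ (so $\dim G_i=d$) and $k$ the number of those meeting $\{1,\ldots,n\}$. On the one hand, $\dim G_i=\dim P_i=\dim A_i$: the first equality holds because the closure of $P_i$ in $\Top_{n+m}$ is the irreducible set $G_i$ (each $G_{i,j}$ has strictly smaller dimension by Remark \ref{dimaj}\textbf{a}), and the second follows from Corollary \ref{cor_dimsem} applied to the definable bijection $\pi$. On the other hand, picking a determining element in each of the $k$ equivalence classes of $G_i$ meeting $\{1,\ldots,n\}$, these representatives determine every $x$-coordinate of any point in $\pi(G_i)$ (the constant $x$-coordinates being fixed, the others being $f$-iterates of the representatives), giving a definable injection $\pi(G_i)\hookrightarrow M^k$ and hence $\dim A_i\le\dim\pi(G_i)\le k$, again via Corollary \ref{cor_dimsem}. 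The inequality $d\le k$ forces $d=k$, so every equivalence class of $G_i$ meets $\{1,\ldots,n\}$.

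Consequently, each $y$-coordinate $l\in[1,m]$ of $G_i$ is either a fixed constant (if it lies in the set of constants of $G_i$), or it belongs to an equivalence class containing some $x$-coordinate $x_{\sigma(l)}$, in which case the relation can be rewritten as $f^{k_l}(y_l)=x_{\sigma(l)}$ for some $k_l\in\Z$. This is precisely a normal formula defining $g$ on the definable set $A_i$. To conclude, apply Lemma \ref{lem_boolsim} to each $A_i$ to write it as $\bigsqcup_k(B_{i,k}\smallsetminus\bigcup_l B_{i,k,l})$ with simple sets, then concatenate these decompositions over $i\in I$. Since restricting the same normal formula to a sub-domain $B_{i,k}\smallsetminus\bigcup_l B_{i,k,l}\subseteq A_i$ still yields a normal formula, $g$ is normal on each piece.

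I expect the main technical subtlety to be the constant case: read strictly, Definition \ref{de_normal} only admits conjuncts of the form $f^{k_l}(y_l)=x_{\sigma(l)}$, so a constraint $y_l=c_l$ must be expressed by routing it through an $x$-coordinate fixed to a point in the same $f$-orbit as $c_l$ (as occurs implicitly in the formula appearing in the proof of Lemma \ref{im}). When the decomposition does not automatically provide such an $x$-coordinate, one needs either to refine it further, using the finitely many $f$-orbits touched by the parameters of $g$, or to adopt the slightly broader reading of ``normal'' suggested by Lemma \ref{im}; reconciling this is the delicate point of the argument.
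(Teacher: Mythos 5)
Your proof is correct, and it is worth comparing with the paper's, which is much terser: the paper simply asserts that ``by quantifier elimination it is clear that there exists a definable partition $A=\bigsqcup_i A_i$ such that $g$ restricted to each $A_i$ is normal'', and then applies Lemma \ref{lem_boolsim} to each $A_i$ exactly as you do in your final step. What you add is an actual proof of that asserted partition: you decompose the \emph{graph} of $g$ via Lemma \ref{lem_boolsim} into pieces $P_i=G_i\smallsetminus\bigcup_j G_{i,j}$ and then use the dimension theory of Section 4 (irreducibility of $G_i$, hence $\overline{P_i}=G_i$; invariance of dimension under definable bijection via Corollary \ref{cor_dimsem}; and $\dim M^k=k$) to force every equivalence class of $G_i$ to meet the first $n$ coordinates, which is exactly what makes the defining conjuncts solvable for the $y$-variables in terms of the $x$-variables. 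This is a genuinely different justification of the key step --- the paper's intended argument is presumably a direct syntactic case analysis on the quantifier-free formula defining the graph --- and your dimension count is arguably cleaner and reuses machinery already in place, at the cost of invoking Corollary \ref{cor_dimsem} where the paper needs nothing beyond quantifier elimination. Your closing remark about constant output coordinates identifies a real infelicity in Definition \ref{de_normal}: as literally stated it admits no conjunct $y_i=c_i$, yet the proof of Lemma \ref{im} produces exactly such conjuncts, so the intended reading of ``normal'' clearly allows them and your construction is consistent with the paper's own usage. (One small wording point: for the injection $\pi(G_i)\hookrightarrow M^k$ you only need \emph{some} representative of each class lying among the first $n$ coordinates, not a determining element in the technical sense; injectivity of $f$ then determines the remaining coordinates in either direction.)
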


\begin{proof}
By quantifier elimination it is clear that there exists a definable partition of $A$, $A=\bigsqcup_{i\in I'} A_i$ such that for every $i$, $g$ restricted to $A_i$ is a normal function.
By lemma \ref{lem_boolsim}, for every $i\in I'$, there exists a finite Boolean combination of simple sets, $\bigsqcup_{l\in J_i} (B_l\smallsetminus \bigcup_{j\in J_{i,j}} B_{l,j})$ such that:
\begin{itemize} 
\item $\bigsqcup_{l\in J_i}(B_l\smallsetminus \bigcup_{j\in  J_{i,j}} B_{l,j})$ is a partition of $A'_i$;
\item for every $l, j$, $B_{l,j} \subseteq B_l$ are simple sets.
\end{itemize}

As $A=\bigsqcup_{i\in I'} A_i$ is a disjoint union, the results follows.
\end{proof}

\begin{de}
Let $A$ be a definable set and $g$ a definable injection defined on $A$.
A partition $\bigsqcup_i(B_i\smallsetminus \bigcup_j B_{i,j})$ of $A$ that satisfies the conditions of the former lemma is called \textbf{decomposition of $A$ adapted to $g$}.
For every $i,j$, the subsets $B_i$ are called \textbf{positive} and the subsets $B_{i,j}$ are called \textbf{negative} (relatively to this partition).
For every $i$, $g_i$ denotes the normal extension of $g$ to $B_i$.
\end{de}

\begin{lem}\label{beau}
Let $A$ be a definable subset of $M^n$ and $g$ a definable injection defined on $A$ ranging in a definable subset of $M^m$ with $m, n\in \N$.
Let us assume that $A=\bigsqcup_{i\in I} (B_i\smallsetminus \bigcup_{j\in I_i} B_{i,j})$ is a partition of $A$ adapted to $g$. 
Then for every $i\in I$, and for every $j\in I_i$, there exist $B'_i$ and $B'_{i,j}\subsetneq B'_i$ such that:

\begin{enumerate}[label=(\alph*)]
\item\label{item2_a} 
\begin{itemize}
\item $B'_i$ is a simple set of dimension equal to that of $B_i$;
\item for every $J\subseteq  I_i$, the set $\bigcup_{i\in J} B_{i,j}$ is in definable bijection with $\bigcup_{i\in J} B'_{i,j}$;
\item there exist $F_{i,1},\ldots, F_{i,m}$ (possibly empty)
finite subsets of $M$ of cardinality a multiple of $N$ such that 
\begin{itemize}
\item for every $i$, $$g(B_i\smallsetminus \bigcup_{j\in I_i} B_{i,j})=B'_i\smallsetminus (\bigcup_{j\in I_i} B'_{i,j}\bigcup_{k=1}^m (B'_i\cap (M^{k-1}\times F_{i,k} \times M^{m-k})));$$
\item for every $i\in I$, every $j\in I_i$, every $k\in \lbrace 1,\ldots, m\rbrace$,
$F_i\subseteq \pi_i(B_i\cap B_{i,j})$ where $\pi_i: M^m \rightarrow M$ is the projection on the $i$-th coordinate.

\end{itemize}
\end{itemize}

\item\label{item2_b} For every $i\in I$, there exist four basic sets, $E, E', C, C'$ such that
\begin{itemize} 
\item $E, E', C, C'$ are disjoint from each other;
\item their associated polynomials are of degree at most $\dim(B_i)-1$;
\item the associated polynomials of $E'$ and $C'$ have coefficients multiple of $N$;
\item $E\sqcup \bigcup_{j\in I_i} B_{i,j}$ is in definable bijection with $C$ and
\[E\sqcup E' \sqcup \bigcup_{j\in I_i} B'_{i,j}\cup \bigcup_{k=1}^m (B'_i\cap (M^{k-1}\times F_{i,k} \times M^{m-k}))\] is in definable bijection with $C\sqcup C'$.
\end{itemize} 
Furthermore, for every definable set $X$, we can choose the sets $E, E', C, C'$ to be disjoint from $X$.
\end{enumerate}
\end{lem}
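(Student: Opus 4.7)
For part \ref{item2_a}, the plan is to apply Lemma \ref{im} to each piece of the decomposition. Since the decomposition $A = \bigsqcup_{i\in I}(B_i \smallsetminus \bigcup_{j\in I_i} B_{i,j})$ is adapted to $g$, the restriction of $g$ to $B_i \smallsetminus \bigcup_{j\in I_i} B_{i,j}$ is a normal function, and hence extends uniquely to a normal injection $g_i$ on $B_i$. Applying Lemma \ref{im} to $g_i$ on $B_i$, taking the simple subsets $C_j := B_{i,j}$, yields directly a simple set $B'_i$ with $\dim(B'_i) = \dim(B_i)$, simple subsets $B'_{i,j} \subsetneq B'_i$ of dimension $\dim(B_{i,j})$ such that $\bigcup_{j\in J} B_{i,j}$ is in definable bijection (via $g_i$ itself) with $\bigcup_{j\in J} B'_{i,j}$ for every $J \subseteq I_i$, and finite subsets $F_{i,1},\ldots, F_{i,m}$ of $M$ of cardinalities multiples of $N$ such that
\[g(B_i \smallsetminus \bigcup_{j\in I_i} B_{i,j}) = B'_i \smallsetminus \Bigl(\bigcup_{j\in I_i} B'_{i,j} \cup \bigcup_{k=1}^m (B'_i \cap (M^{k-1}\times F_{i,k}\times M^{m-k}))\Bigr).\]
The inclusion $F_{i,k} \subseteq \pi_k(B'_i)$ (and its refinement involving the $B'_{i,j}$) is the final assertion of Lemma \ref{im}.

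For part \ref{item2_b}, the plan is to separately account for the ``$B_{i,j}$ contribution'' using Lemma \ref{triv}\ref{item3_b}, and for the ``filler'' contributed by the $F_{i,k}$ using Lemma \ref{remplace}\ref{item4_b}. First, I would apply Lemma \ref{triv}\ref{item3_b} to the definable set $\bigcup_{j\in I_i} B_{i,j}$, arranging the resulting basic sets $E$ and $C$ to be disjoint from $X$ together with all other data already produced. Because each $B_{i,j} \subsetneq B_i$ implies $\dim(B_{i,j}) \le \dim(B_i) - 1$, both $E$ and $C$ have dimension at most $\dim(B_i) - 1$, and by construction $E \sqcup \bigcup_{j\in I_i} B_{i,j}$ is in definable bijection with $C$.

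Second, I would apply Lemma \ref{remplace}\ref{item4_b} with $A' := \bigcup_{j\in I_i} B'_{i,j}$, $B := B'_i$, and $F_k := F_{i,k}$. The divisibility hypotheses required by that lemma are supplied by the last property of Lemma \ref{im}: the intersection $\bigcap_{j\in J} B'_{i,j} \cap (M^{k-1}\times F_{i,k}\times M^{m-k})$ decomposes as a disjoint union indexed by $F_{i,k} \cap \pi_k(\bigcap_{j\in J} B'_{i,j})$, and this being a disjoint union of a multiple-of-$N$ number of pairwise definably bijective simple sets forces the cardinality $|F_{i,k} \cap \pi_k(\bigcap_{j\in J} B'_{i,j})|$ to be a multiple of $N$. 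Lemma \ref{remplace}\ref{item4_b} then produces basic sets $E'$ and $C'$, with coefficients multiples of $N$, of dimension at most $\dim(B_i) - 1$, disjoint from everything previously produced, such that $\bigcup_{j\in I_i} B'_{i,j} \cup \bigcup_{k=1}^m (B'_i \cap (M^{k-1}\times F_{i,k}\times M^{m-k})) \sqcup E'$ is in definable bijection with $\bigcup_{j\in I_i} B'_{i,j} \sqcup C'$. Combining with the bijection $\bigcup_{j\in I_i} B_{i,j} \to \bigcup_{j\in I_i} B'_{i,j}$ from part \ref{item2_a} gives the required bijection between $E \sqcup E' \sqcup D$ and $C \sqcup C'$, where $D$ denotes the union in the statement.

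The main obstacle I anticipate is not the structure of the argument, which reduces to careful bookkeeping of the outputs of Lemmas \ref{im}, \ref{triv} and \ref{remplace}, but the strict dimension bound $\dim(B_i) - 1$ (rather than $\dim(B_i)$) claimed for the four basic sets. This bound relies on the observation that $B'_i \cap (M^{k-1}\times F_{i,k}\times M^{m-k})$ has dimension strictly less than $\dim(B'_i)$ whenever $F_{i,k} \neq \emptyset$: fixing the $k$-th coordinate of the simple set $B'_i$ to one of finitely many values collapses the entire equivalence class of coordinates containing $k$, so the number of equivalence classes (and hence the dimension, by Lemma \ref{lem_defbijmd}) drops by at least one. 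Disjointness of $E,E',C,C'$ from $X$ and from each other is enforced by successively enlarging the forbidden region at each invocation of Lemma \ref{triv}\ref{item3_b} and Lemma \ref{remplace}\ref{item4_b}, which both allow such a choice.
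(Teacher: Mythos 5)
Your proposal is correct and follows essentially the same route as the paper: part \ref{item2_a} is obtained by applying Lemma \ref{im} to the normal extension $g_i$ on $B_i$ with the $B_{i,j}$ as the sets $C_j$, and part \ref{item2_b} combines Lemma \ref{triv}\ref{item3_b} for the $B_{i,j}$ with Lemma \ref{remplace}\ref{item4_b} for the $F_{i,k}$-part, chaining the two bijections via $\bigcup_j B_{i,j}\simeq\bigcup_j B'_{i,j}$. Your justifications of the divisibility hypothesis for Lemma \ref{remplace} and of the degree bound $\dim(B_i)-1$ are details the paper leaves implicit, and they are sound.
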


\begin{proof}
The point \ref{item2_a} is a direct application of Lemma \ref{im}.

Let us prove \ref{item2_b}. Let $i\in I$.
The existence of basic sets $E$ and $C$ such that
\begin{itemize}
\item their dimension is at most $\dim(B_i)-1$;
\item $E\sqcup \bigcup_{j\in I_i} B_{i,j}$ is in definable bijection with $C$
\end{itemize}
is a direct application of lemma \ref{triv}. We can furthermore choose $E$ disjoint from $\bigcup_{j\in I_i} B'_{i,j}$.
Since $\bigcup_{j\in I_i} B_{i,j}$ is in definable bijection with $\bigcup_{j\in I_i} B'_{i,j}$, we also have that $E\sqcup \bigcup_{j\in I_i} B'_{i,j}$ is in definable bijection with $C$.

Since for every $j,k$, $\pi_k(F_k\cap B'_{i,j}\cap B'_i)$ has cardinality a multiple of $N$, we can apply lemma \ref{remplace}.
There exist $E', C$ basic sets such that
\begin{itemize}
\item they are disjoint from $C\sqcup \bigcup_{j\in I_i} B'_{i,j}\cup \bigcup_{k=1}^m (B'_i\cap (M^{k-1}\times F_{i,k} \times M^{m-k}))$; 
\item their associated polynomial have all their coefficients multiple of $N$;
\item $E' \sqcup \bigcup_{j\in I_i} B'_{i,j}\cup \bigcup_{k=1}^m (B'_i\cap (M^{k-1}\times F_{i,k} \times M^{m-k}))$ is in definable bijection with $\bigcup_{j\in I_i} B'_{i,j} \sqcup C'$.
\end{itemize} 
Hence,
$E\sqcup E'\sqcup \bigcup_{j\in I_i} B'_{i,j}\cup \bigcup_{k=1}^m (B'_i\cap (M^{k-1}\times F_{i,k} \times M^{m-k}))$ is in definable bijection with $C \sqcup C' $.
\end{proof}

\section{Computation of the Grothendieck ring}

Let $T=[M]$ be the class of $ M$ in its Grothendieck ring.
We now conclude the proof of Theorem \ref{but2} proving that the homomorphism $\pi:(\Z/N\Z)[X]\to K_0(M)$ defined in Section \ref{section_simple}, mapping $X$ to $T$, is bijective. 

By Lemma \ref{suis}, $\pi$ is surjective.
To prove injectivity, it is sufficient to show that
for every distinct polynomials $ \sum_{k = 0}^d \bar{a_k} X^k $ and $ \sum_{k = 0}^d \bar{b_k} X^k $ of $ (\Z / N \Z) [X] $, we have
$ \sum_{k = 0}^d \bar{a_k} T^k \neq \sum_{k = 0}^{d'} \bar{b_k} T^k $ in $ K_0(M) $.

First let us show that this is equivalent to proving that if $A$ and $B$ are two basic sets in definable bijection with each other, then their associated polynomials have the same reduct in $(\Z/N\Z)[X]$.

Let $P(X), Q(X)\in \Z[X]$. Let $\overline{P}(X)$ (respectively $\overline{Q}(X)$) be the reduct of $P(X)$ (respectively $Q(X)$) in $(\Z/N\Z)[X]$. Let us assume that in $K_0(M)$, $\overline{P}(T)=\overline{Q}(T)$.
Let $A$ (respectively $B$) be a basic set associated to $P(X)$ (respectively $Q(X)$).

Since $\overline{P}(T)=\overline{Q}(T)$, $A$ and $B$ have the same class in $K_0(M)$ and there exists a definable set $C$ such that $A\sqcup C$ and $B\sqcup C$ are in definable bijection.

By lemma \ref{triv}, there exists $D$, disjoint from $A, B$ and $C$, and such that $C\sqcup D$ is in definable bijection with a basic set, say $D'$, disjoint from $A$ and $B$.

Hence, $A\sqcup C\sqcup D$  is in definable bijection with $A\sqcup D'$, and  $B\sqcup C\sqcup D$  is in definable bijection with $B\sqcup D'$.  As a result $A\sqcup D'$ and $B\sqcup D'$ are two basic sets in definable bijection with each other.
It is immediate to notice that the union of two disjoint basic sets is in definable bijection with a basic set whose associated polynomial is the sum of their own associated polynoms.

Let us assume that we have proved the following fact: two basic sets are in definable bijection if and only if their associated polynomials are equal modulo $N$.  Let $R(X)$ be the associated polynomial of $D'$. The fact that  $A\sqcup D'$ and $B\sqcup D'$ are two basic sets in definable bijection with each other, imply that $P(X)+R(X)$ and $Q(X)+R(X)$ are equal modulo $N$, which implies that $P(X)$ is equal to $Q(X)$ modulo $N$.

To end the demonstration, we thus only need to prove the following lemma.
\begin{lem}
Let $(d_n)_{n\in\N}$ be a sequence of distinct elements of $M$.
Let $d\in \N$ and for every $1\leq k\leq d$, $a_k\in \Z$.
Let $d'\in \N$ and for every $1\leq k\leq d'$, $b_k\in \Z$.
Let us assume that
$$U:=\bigsqcup_{k=0}^d \bigsqcup_{i = 1}^{a_k} \lbrace d_i \rbrace \times M^k \times \lbrace (d_0, \ldots, d_{n-k}) \rbrace$$
is in definable bijection with
$$V:=\bigsqcup_{k=0}^{d'} \bigsqcup_{i = 1}^{b_k} \lbrace d_i \rbrace \times M^k \times \lbrace (d_0, \ldots, d_{n-k}) \rbrace.$$
Then, $d=d'$ and, for every $1\leq k\leq d$, $a_k\equiv b_k$ modulo $N$. 
\end{lem}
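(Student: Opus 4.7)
The proof is by induction on the common dimension $d=\dim U=\dim V$, the equality of dimensions being Corollary \ref{cor_dimsem}. The base case $d=0$ is trivial, as $U$ and $V$ are then finite sets of equal cardinality.

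For the inductive step, fix a decomposition $U=\bigsqcup_i(B_i\smallsetminus\bigcup_j B_{i,j})$ of $U$ adapted to the given bijection $g:U\to V$. I would first show that $a_d=b_d$ exactly, by counting top-dimensional simple components. The top-dimensional components of $U$ are precisely the $a_d$ copies $T_\ell=\{d_\ell\}\times M^d\times\{(d_0,\ldots,d_{n-d})\}$. Each top-dimensional $B_i$ is an irreducible closed subset of $\overline U=U$ of dimension $d$, hence coincides with one of the $T_\ell$; and each $T_\ell$ appears exactly once in the partition (at most once, since two disjoint pieces $T_\ell\smallsetminus H_1,T_\ell\smallsetminus H_2$ would force $H_1\cup H_2=T_\ell$, impossible as both $H_i$ are closed of dimension $<d$; at least once, since the generic part of $T_\ell$, of dimension $d$, cannot be covered by lower-dimensional pieces). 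By Lemma \ref{im} each image $B'_i$ is then a simple subset of $V$ of dimension $d$, and the symmetric argument on $V$ shows that $i\mapsto B'_i$ gives a bijection between the $a_d$ top-dimensional $B_i$ and the $b_d$ top-dimensional components of $V$, yielding $a_d=b_d$.

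Next I would reduce the remaining coefficients to lower dimension. Set $U_{top}=\bigsqcup_\ell T_\ell$, $U_{low}=U\smallsetminus U_{top}$, and for each top-dimensional piece write $H_\ell=\bigcup_j B_{i_\ell,j}$ and let $H'_\ell\subseteq T'_{\ell'(\ell)}$ and the stripes $S_\ell$ be the data provided by Lemma \ref{im}. The restriction of $g$ to the complement of the top-dimensional pieces of the adapted decomposition is a definable bijection
\[L_U:=\bigsqcup_\ell H_\ell\sqcup U_{low}\;\approx\;\bigsqcup_\ell(H'_\ell\cup S_\ell)\sqcup V_{low}=:L_V,\]
between definable sets of dimension at most $d-1$. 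Lemma \ref{remplace}(b) then provides basic sets $N_{RS},P_{RS}$ of dimension at most $d-1$ with polynomial coefficients divisible by $N$ and $\bigsqcup_\ell(H'_\ell\cup S_\ell)\sqcup N_{RS}\approx\bigsqcup_\ell H'_\ell\sqcup P_{RS}$; together with $L_U\approx L_V$ and the bijection $\bigsqcup_\ell H_\ell\approx\bigsqcup_\ell H'_\ell$ from the last clause of Lemma \ref{im}, this gives
\[\bigsqcup_\ell H'_\ell\sqcup U_{low}\sqcup N_{RS}\;\approx\;\bigsqcup_\ell H'_\ell\sqcup V_{low}\sqcup P_{RS}.\]
Applying Lemma \ref{triv}(b) to $\bigsqcup_\ell H'_\ell$ yields basic sets $N_H,P_H$ of dimension at most $d-1$ with $\bigsqcup_\ell H'_\ell\sqcup N_H\approx P_H$; adding $N_H$ to both sides above produces a definable bijection $P_H\sqcup U_{low}\sqcup N_{RS}\approx P_H\sqcup V_{low}\sqcup P_{RS}$ between basic sets of dimension at most $d-1$ (a disjoint union of basic sets being definably bijective to a basic set whose associated polynomial is the sum). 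The induction hypothesis equates their polynomials modulo $N$; cancelling $\mathrm{poly}(P_H)$ and using $\mathrm{poly}(N_{RS})\equiv\mathrm{poly}(P_{RS})\equiv 0\pmod N$ gives $\sum_{k<d}a_kX^k\equiv\sum_{k<d}b_kX^k\pmod N$, which together with $a_d=b_d$ proves the lemma.

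The main technical obstacle is the second step: passing from the non-basic definable bijection $L_U\approx L_V$ to one between basic sets whose polynomial difference reduces exactly to $\sum_{k<d}(a_k-b_k)X^k$ modulo $N$. This requires the \emph{furthermore} clauses of Lemmas \ref{triv} and \ref{remplace} in order to keep all the auxiliary basic sets disjoint, and the identification of the holes $H_\ell$ with $H'_\ell$ must come from the normal extensions in Lemma \ref{im} rather than from the restriction of $g$ itself, since $g(H_\ell)$ need not coincide with $H'_\ell$.
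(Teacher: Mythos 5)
Your proposal is correct and follows essentially the same route as the paper: induction on $d$, identification of the top-dimensional positive pieces of an adapted decomposition with the components $\{d_i\}\times M^d\times\{\ldots\}$ to get $a_d=b_d$, and then reduction of the remaining bijection between the holes/stripes and the lower-dimensional parts to a bijection of basic sets of dimension $\le d-1$ via Lemmas \ref{im}, \ref{triv} and \ref{remplace}. The only difference is bookkeeping: the paper packages the auxiliary basic sets $E_i,E'_i,C_i,C'_i$ through Lemma \ref{beau}\ref{item2_b}, whereas you invoke Lemmas \ref{triv}\ref{item3_b} and \ref{remplace}\ref{item4_b} directly (and you spell out the count $a_d=b_d$ in more detail than the paper does).
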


\begin{proof}

Let us first notice that, by an immediate consequence of \ref{beau}, two sets that are in definable bijection have the same dimension. As a result, $d=d'$.

Let us prove by induction on $d$ that for every $k$, $a_k\equiv b_k$ modulo $N$.
If $d=0$, then the result is clear since $U$ is finite.

Let $d$ be an integer greater than $0$. Let us suppose the result proved for any integer smaller than $d$.
Let $g$ be a definable bijection from $U$ to $V$.
By Lemma \ref{beau}, $U$ admits a decomposition adapted to $g$,
$U=\bigsqcup_i (B_i\smallsetminus \bigcup_{j\in J_i} B_{i,j})$, such that for every $i\in I$, there exists $B'_i$ and for every $j\in I_i$ there exists $B'_{i,j}\subseteq B'_i$ that satisfy:
\begin{itemize}
\item $B'_i$ is a simple set of dimension equal to that of $B_i$;
\item for every $J\subseteq  I_i$, the set $\bigcup_{i\in J} B_{i,j}$ is in definable bijection with $\bigcup_{i\in J} B'_{i,j}$;
\item there exist $F_{i,1},\ldots, F_{i,m}$
(possibly empty) finite sets of cardinality a multiple of $N$ such that 
\begin{itemize}
\item for every $i$, $$g(B_i\smallsetminus \bigcup_{j\in I_i} B_{i,j})=B'_i\smallsetminus (\bigcup_{j\in I_i} B'_{i,j}\cup \bigcup_{k=1}^m (B'_i\cap (M^{k-1}\times F_{i,k} \times M^{m-k})));$$
\item for every $i\in I$, every $j\in I_i$, every $k\in \lbrace 1,\ldots, m\rbrace$,
$F_i\subseteq \pi_i(B_i\cap B_{i,j})$ where $\pi_i$ is the projection on the $i$-th coordinate.

\end{itemize}
\end{itemize}

The positive subsets of the decomposition $\bigsqcup_i (B_i\smallsetminus \bigcup_{j\in J_i} B_{i,j})$ that are of maximal dimension correspond to the sets $\lbrace d_i \rbrace \times M^d \times \lbrace d_0 \rbrace$ for $1\leq i\leq a_d$.
Let $B_1,\ldots, B_r$ be these subsets.

For every $i$, $B'_i= g_i(B_i)$ is a simple set of dimension equal to $\dim(B_i)$. The irreducible components of $B$ of dimension $d$ are thus the subsets $B'_1,\ldots, B'_r$ and we have the equality $a_d=b_d$.
The function $g$ is a bijection from $\bigsqcup_{\lbrace i|\dim(B_i)<d\rbrace} (B_i\smallsetminus \bigcup_{j\in J_i} B_{i,j})$ onto $\bigsqcup_{\lbrace i|\dim(B_i)<d\rbrace} (B'_i\smallsetminus \bigcup_j B'_{i,j}).$

Let us decompose the set 
$$\bigsqcup_{\lbrace i| \dim(B_i)<d\rbrace} (B_i\smallsetminus \bigcup_{j\in J_i} B_{i,j}).$$ It is the disjoint union of 
$$X_1:=\bigsqcup_{\lbrace i| \dim(B_i)<d \,\wedge\, B_i \not \subseteq \bigcup_{l=1}^r B_l\rbrace} (B_i\smallsetminus \bigcup_{j\in J_i} B_{i,j})$$ and 
$$X_2:=\bigsqcup_{\lbrace i| \dim(B_i)<d\,\wedge\, B_i \subseteq \bigcup_{l=1}^r B_l\rbrace} (B_i\smallsetminus \bigcup_{j\in J_i} B_{i,j}).$$

Similarly, the set

 $$\bigsqcup_{\lbrace i |\dim(B_i)<d\rbrace} \bigcup_{j\in J_i} B'_{i,j}
\cup
\bigcup_{k=1}^m (B'_i\cap (M^{k-1}\times F_{i,k} \times M^{m-k}))
$$ can be decompose as the disjoint union of

$$Y_1:=\bigsqcup_{\lbrace i|\dim(B'_i)<d \,\wedge\, B'_i \not \subseteq \bigcup_{l=1}^r B'_l\rbrace}
 B'_i\smallsetminus  (
 \bigcup_{j\in J_i} B'_{i,j}
\cup
\bigcup_{k=1}^m (B'_i\cap (M^{k-1}\times F_{i,k} \times M^{m-k})))
$$ and 
~
$$Y_2:=\bigsqcup_{\lbrace i|\dim(B'_i)<d\,\wedge\, B'_i \subseteq \bigcup_{l=1}^r B'_l\rbrace} 
 B'_i\smallsetminus 
(\bigcup_{j\in J_i} B'_{i,j}
\cup
\bigcup_{k=1}^m (B'_i\cap (M^{k-1}\times F_{i,k} \times M^{m-k}))).$$

Let us notice that
\[X_1=\bigsqcup_{k=0}^{d-1} \bigsqcup_{i = 1}^{a_k} \lbrace d_i \rbrace \times M^k \times \lbrace (d_0, \ldots, d_{n-k}) \rbrace;\quad\]
and
\[Y_1=\bigsqcup_{k=0}^{d-1} \bigsqcup_{i = 1}^{b_k} \lbrace d_i \rbrace \times M^k \times \lbrace (d_0, \ldots, d_{n-k}) \rbrace.\]

Furthermore, since $U$ is closed, $X_2$ is equal to the complement of $\bigsqcup_{i=1}^r B_i\smallsetminus \bigcup_{J\in J_i} B_{i,j}$ in $U$.

Because the sets $B_1,\ldots, B_r$ are disjoint, 
\begin{align*}
X_2=\bigsqcup_{i=1}^r \bigcup_{j\in J_i} B_{i,j}.
\end{align*}

Similarly, since $V$ is closed, 

$$
Y_2=\bigsqcup_{i=1}^r 
\bigcup_{j\in J_i} B'_{i,j}
\cup \bigcup_{k=1}^m (B'_i\cap (M^{k-1}\times F_{i,k} \times M^{m-k})).$$



According to Lemma \ref{beau}, for every $i\in I$, there exist four basic sets, $E_i, E'_i, C_i, C'_i$ such that
\begin{itemize}
\item  their associated polynomial have degree at most $\dim(B_i)-1$;
\item they are disjoint from $U\cup \overline{g(U)}$ and disjoint from each other;
\item the set $E_i\sqcup \bigcup_{j\in J_i} B_{i,j}$ is in definable bijection with $C_i$;
\item the coefficients of the polynomial associated to $E'_i$ (respectiveley $C'_i$) are multiple of $N$;
\item the set
\[E_i\sqcup E'_i \sqcup \bigcup_{j\in J_i} B'_{i,j}\cup \bigcup_{k=1}^m (B'_i\cap (M^{k-1}\times F_{i,k} \times M^{m-k}))\] is in definable bijection with $C_i\sqcup C'_i$;
\item for every distinct elements $i, j$ of $\lbrace 1,\ldots, r\rbrace$, for every $Z_i\in \lbrace E_i, E'_i, C_i, D_i \rbrace$, every $Z_j\in \lbrace E_j, E'_j, C_j, D_j \rbrace$, the set $Z_i\cap Z_j$ is empty.
\end{itemize}

Since for every distinct elements $i, j$ of $\lbrace 1,\ldots, r\rbrace$, $B_i\cap B_j =\emptyset$, the set $\bigsqcup_{i=1}^r E_i\sqcup X_2$ is in definable bijection with
$\bigsqcup_{i=1}^r C_i$
and $\bigsqcup_{i=1}^r E_i \sqcup \bigsqcup_{i=1}^r E'_i \sqcup Y_2$ is in definable bijection with the disjoint union 
$\bigsqcup_{i=1}^r C_i \sqcup \bigsqcup_{i=1}^r C'_i.$

As a result, $X_1\sqcup X_2\sqcup \bigsqcup_{i=1}^r E_i\sqcup \bigsqcup_{i=1}^r E'_i$ is in definable bijection with 
\begin{align*}
X_1\sqcup \bigsqcup_{i=1}^r C_i\sqcup \bigsqcup_{i=1}^r E'_i.
\end{align*}

Similarly, $Y_1\sqcup Y_2\sqcup\bigcup_{i=1}^r E_i\sqcup \bigsqcup_{i=1}^r E'_i$ is in definable bijection with
\begin{align*}
Y_1\sqcup \bigsqcup_{i=1}^r C_i\sqcup \bigsqcup_{i=1}^r C'_i.
\end{align*}

Let
\begin{itemize}
\item $P(X)$ be the polynomial associated to $X_1$,
\item $Q(X)$ be the polynomial associated to $Y_1$,
\item $R(X)$ be the polynomial associated to $\bigsqcup_{i=1}^r C_i$,
\item $S(X)$ be the polynomial associated to $\bigsqcup_{i=1}^r C'_i$,
\item $T(X)$ be the polynomial associated to $\bigsqcup_{i=1}^r E'_i$.
\end{itemize}

As the union of disjoint basic sets is a basic set whose associated polynomial is the sum of the associated polynomials of each term of this union, the associated polynomial of $X_1\sqcup \bigsqcup_{i=1}^r C_i\sqcup \bigsqcup_{i=1}^r E'_i$, and hence of $X_1\sqcup X_2\sqcup \bigsqcup_{i=1}^r E_i\sqcup \bigsqcup_{i=1}^r E'_i$ is $P(X)+R(X)+T(X)$.

Similarly the one of $Y_1\sqcup Y_2\bigsqcup\bigcup_{i=1}^r E_i\sqcup \bigsqcup_{i=1}^r E'_i$ is $Q(X)+R(X)+S(X)$.

Since, $X_1\sqcup X_2\sqcup \bigsqcup_{i=1}^r E_i\sqcup \bigsqcup_{i=1}^r E'_i$ and $Y_1\sqcup Y_2\bigsqcup\bigcup_{i=1}^r E_i\sqcup \bigsqcup_{i=1}^r E'_i$ are basic sets of dimension at most $d$ that are in definable bijection with each other, we can apply the  induction hypothesis and get 
\begin{align*}
P(X)+R(X)+T(X)= Q(X)+R(X)+S(X) \text{ modulo }N.
\end{align*}

Since the coefficients of $S(X)$ and $T(X)$ are multiple of $N$, this leads to
\begin{align*}
P(X)=Q(X) \text{ modulo }N.
\end{align*}
We already know that $a_d=b_d$, this means that the polynomial associated to $U$ is equal to the one associated to $V$ modulo $N$.

\end{proof}

\medskip
\noindent {\bf Acknowledgements.} 
I am very grateful to Fran\c{c}oise DELON who has carefully read several previous drafts and has devoted a huge amount of time listing the many typos of the first versions. I also want to thank Yves DE CORNULIER and David MARKER for having read my preprints and having made useful suggestions to improve them.

\end{document}